\documentclass[12pt,a4paper, reqno]{amsart}
\usepackage[utf8]{inputenc}

\usepackage{latexsym}

\setlength{\textwidth}{\paperwidth}
\addtolength{\textwidth}{-2.5in}
\calclayout
 
\usepackage{graphicx} 
\usepackage{amsmath}  
\usepackage{amssymb}  
\usepackage{amsthm} 
\usepackage{xcolor}

\usepackage{amsfonts} 
\usepackage{mathtools} 
\usepackage{physics}  
\usepackage{bm}       
\usepackage{cancel}   
\usepackage{bbm}      
\usepackage{nicefrac} 
\usepackage{siunitx}  
\usepackage{xfrac}    

\theoremstyle{plain}
\newtheorem{theorem}{Theorem}
\newtheorem{lemma}[theorem]{Lemma}

\newtheorem{definition}[theorem]{Definition}
\newtheorem{remark}[theorem]{Remark}

\numberwithin{theorem}{section}
\numberwithin{equation}{section}
\allowdisplaybreaks

\DeclareMathOperator*{\esslim}{ess\, lim}
\DeclareMathOperator{\sgn}{sgn}

\def\pa{\partial}

\begin{document}

\title[Multidimensional Scalar Conservation Laws]{MULTIDIMENSIONAL SCALAR CONSERVATION LAWS WITH NON-ALIGNED DISCONTINUOUS FLUX AND SINGULARITY OF SOLUTIONS
}

\author{AJLAN ZAJMOVI\' C }
\address{Faculty of Science and Mathematics, University of Montenegro,\\
Cetinjski put bb,
81000 Podgorica, Montenegro \\ 
}
\email{ajlan.z@ucg.ac.me} 

\begin{abstract}
We prove that the family of solutions to vanishing viscosity approximation for multidimensional scalar conservation laws with discontinuous non-aligned flux and zero initial data in the limit generates a singular measure supported along the discontinuity surface. 
\end{abstract}

\keywords{Conservation law, discontinuous flux, vanishing viscosity, 
singular measure}

\maketitle

\section{Introduction}

In the current contribution, we analyse a multi-dimensional extension of the scalar conservation laws with spatially discontinuous flux and zero initial data and aim to prove that its vanishing viscosity approximation generates a family of approximate solutions whose limit admits a singular measure supported on the discontinuity interface. By means of a simple change of unknown function (see \eqref{riemann} below), we can equally well consider a Riemann-type problem. Specifically, we consider
\begin{equation}
\label{eqmain}
\begin{split}
\partial_t u &+ \sum\limits_{k=1}^{d}\partial_{x_k}\left(f^k_L(\hat{x}_k,u) H(-x_1+\varphi(\hat{x}_1)) + f^k_R(\hat{x}_k,u) H(x_1-\varphi(\hat{x}_1))\right) = 0, 
\end{split}
\end{equation}  subject to the zero initial data $u|_{t=0} = u_0(x)=0$. Here, $u: \mathbb{R}_+ \times \mathbb{R}^d \to \mathbb{R}$ is the scalar unknown, and 
\begin{itemize}
    \item $\hat{x}_k=(x_1,\dots, x_{k-1},x_{k+1},\dots,x_d)\in \mathbb{R}^{d-1}$, $t\geq 0$;
    \item $H(z)=\begin{cases}
        1, & z\geq0\\
        0, & z < 0
    \end{cases}$ is the Heaviside function;
    \item the discontinuity manifold is given as the equipotential surface
$$
D=\{ x\in \mathbb{R}^d: \, x_1=\varphi(\hat{x}_1), \ \ \varphi\in C^2(\mathbb{R}^{d-1})\};
$$

\item  the vector fields $f_L=(f_L^1,\dots, f_L^d):\mathbb{R}^{d}\to\mathbb{R}^d$ and $f_R=(f_R^1,\dots, f_R^d) : \mathbb{R}^{d}\to\mathbb{R}^d$, for $k=1,\dots, d$, are bounded, continuously differentiable functions satisfying the {\bf non-alignment} condition:
\begin{equation}
   \label{(uslov)}
   \begin{split}
&\int\limits_{\mathbb{R}^{d-1}}\big(\max_{\lambda \in \mathbb{R}} f^k_L(\hat{x}_k,\lambda) - \min_{\lambda \in \mathbb{R}} f^k_R(\hat{x}_k, \lambda) \big)\, d\hat{x}_k\leq c<0, \ \ {\rm and} \\
&  \ \ f^k_L(\hat{x}_k,0) \leq f^k_R(\hat{x}_k,0) \ \  \forall k=1,\dots,d;
\end{split}
\end{equation} 

\item we assume that the flux functions \( f_{L}^k \) and \( f_{R}^k \), $k=1,\dots,d$, belong to \( L^{\infty}(\mathbb{R}^{d}) \), and their restrictions with respect to the transverse variables satisfy 
\begin{equation} 
\label{L1cond}
\sup\limits_{z\in \mathbb{R}}|f_{L,R}^k(\cdot_{\hat{x}_k},z)| \in L^1(\mathbb{R}^{d-1}). 
\end{equation}

\end{itemize}
 
Moreover, we assume that $\varphi$ is monotonic with respect to all the variables (see Figure \ref{Fig1}). For simplicity, we assume that for every $k=2,\dots,d$
\begin{equation}
\label{decreasing}
\frac{\partial \varphi}{\partial x_k}<0.
\end{equation} If, on the other hand, there exists some \( l \in \{2, \dots, d\} \) such that \( \frac{\partial \varphi (\hat{x}_1) }{\partial x_l} > 0 \), instead of \eqref{(uslov)}, we would require the opposite inequality with $min$ and $max$ interchanged:
\begin{equation}
   \label{(uslov)-1}
\int\limits_{\mathbb{R}^{d-1}}\big(\min_{\lambda \in \mathbb{R}} f^l_L(\hat{x}_l,\lambda) - \max_{\lambda \in \mathbb{R}} f^l_R(\hat{x}_l, \lambda) \big) d\hat{x}_k\geq c>0.
\end{equation}

 We note that in one-dimensional case, condition \eqref{(uslov)} reduces to (see \cite[(1.2)]{karlsen2025})
$$
\max_{\lambda \in \mathbb{R}} f_L(\lambda) - \min_{\lambda \in \mathbb{R}} f_R(\lambda)<0.
$$

\begin{figure}[h]
\label{Fig1}    \centering
    \includegraphics[width=0.6\textwidth]{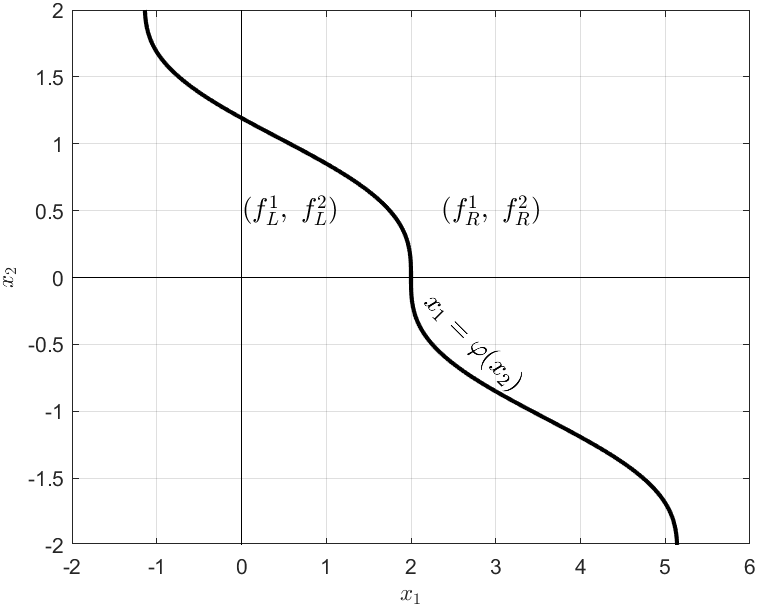}
    \caption{Schematic representation of the flux discontinuity surface in 2D case}
    \label{fig:jmf}
\end{figure}

We conclude the first part of the introduction by noting that, instead of the zero initial data, we can equally well consider the Riemann initial condition
\begin{equation}
\label{riemann}
u|_{t=0} = u_0(x) :=
\begin{cases}
u_L, & x_1 < \varphi(\hat{x}_1), \\
u_R, & x_1 > \varphi(\hat{x}_1),
\end{cases}
\end{equation}
where $u_L, u_R \in \mathbb{R}$ are constant states.

Indeed,  by introducing the substitution \( v = u - u_0 \), the new initial condition becomes \( v_0 = 0 \) and we are back to the original situation (with a different flux still satisfying \eqref{(uslov)}). 

Note that the non-alignment condition~\eqref{(uslov)} rules out the trivial solution $u_\varepsilon \equiv 0$, since the systematic mismatch between the left and right fluxes necessarily generates a singular concentration on the discontinuity interface.

\subsection{Motivation and Context}

Conservation laws with discontinuous fluxes naturally arise in various applications, including multi-phase flows in porous media, traffic flow with bottlenecks or varying road conditions, sedimentation and clarification processes, transport phenomena in heterogeneous materials, as well as in geophysical and engineering models \cite{burger2003,burger2009, bustos1999,lighthill1955}. These problems are also often studied in the context of convergence and stability of numerical methods (see e.g.  \cite{karlsen2023primjena} and references therein). The formulation in \eqref{eqmain} generalizes the classical Riemann problem to settings with sharp interfaces, where the flux law switches discontinuously across a hyperplane. As noted by Abreu et al. \cite{Abreu2024}, such settings may lead to shocks that do not satisfy classical Lax’s conditions and may require weak or measure-valued solutions.

Recently, considerable attention has been devoted to conservation laws with discontinuous flux functions, especially those involving so-called ``distanced fluxes.'' A notable model of this type was introduced by Adimurthi, Mishra, and Gowda \cite{adimurthi2007}, who studied conservation laws with spatially variable discontinuities in the flux. In their framework, the point \( x = 0 \) acts as an attractor toward which the fluid or gas flows. This behavior is captured by rewriting the conservation law in a non-conservative form, leading to a nonlinear transport equation with a singular source term of the form \( (f(u) - g(u))\delta(x) \), where \( u \) denotes the density of the fluid or gas. The consistently non-negative term \( (f(u) - g(u))\delta(x) \) effectively increases the density at \( x = 0 \), modeling a localized gain of mass or energy.

Although no direct physical applications of such equations are currently known, they may offer a simplified framework for modeling certain astrophysical phenomena. In particular, such formulations may be relevant in cosmological settings where space itself is treated as a compressible fluid. One prominent example is the (generalized) Chaplygin gas model, which attempts to unify dark energy and dark matter as manifestations of a single exotic fluid characterized by an unconventional equation of state. Discontinuities in the flux can be interpreted as the result of interactions between large-scale cosmic structures, such as galaxies and voids. Moreover, Dirac delta solutions corresponds to localized high-density events in the universe, potentially representing phenomena such as black hole formation.

In this context, the model considered here can be viewed as a simplified version of the generalized Chaplygin gas equations. A rigorous mathematical treatment of such models, including results on uniqueness and the structure of solutions involving singularities, is given by \cite{nedeljkov2017} (see also references therein).

In one spatial dimension, the theory of such problems is well developed, yet even in that setting, the presence of a discontinuous flux may lead to non-classical solutions, such as $\delta$-shocks or other singular measures. In fact, under the above inequality, there may exist no weak solution satisfying standard entropy conditions, necessitating the consideration of measure-valued solutions supported at the interface. This phenomenon has been observed in earlier works \cite{diehl1995, gimse1991, isaacson1992}, where admissibility criteria such as minimal jump and variation conditions were introduced. Similar problems have been studied in \cite{adimurthi2003, adimurthi2005, andreianov2011}, where non-uniqueness of entropy solutions and failure of Rankine–Hugoniot consistency were rigorously addressed.

The class of flux discontinuities considered here generalizes one-dimensional models such as those studied in \cite{karlsen2025, krunic2021}, where a spatially discontinuous flux leads to the formation of singular solutions that cannot be interpreted as classical weak solutions. In the one-dimensional setting, the interplay between discontinuous fluxes and interface geometry has been shown to produce nontrivial phenomena, including measure-valued solutions involving Dirac deltas.

The occurrence of such singularities has been rigorously examined first in \cite{krunic2021} via the shadow wave approach (see also \cite{nedeljkov2010}) and then in the case of the vanishing viscosity approach \cite{karlsen2025} as we are doing here in the multidimensional situation. We note that our contribution essentially relies on the adaptation of techniques developed in \cite{karlsen2025}.  

In both papers \cite{karlsen2025, krunic2021}, the limiting solutions with Dirac delta components supported at the interface were discovered. These techniques rely on regularized flux approximations and smooth Heaviside transitions. The resulting limits, although singular, satisfy a generalized weak formulation with an added measure term that compensates for the flux imbalance across the interface. We address readers to \cite{dalmaso, danilov2005, kalisch2012, kalisch2022, nedeljkov2004} for possible approaches on formalizing the weak solutions concept. 

We note also that the weak asymptotic techniques were used in describing singular limits in conservation laws. These include the works \cite{danilov2003, danilov2008, shelkovich2005}, where interactions of nonlinear waves and $\delta$-shock dynamics are studied. Moreover, Colombeau-type generalized function frameworks \cite{colombeau1984, nedeljkov2004} have been successfully applied in capturing multiplication of distributions, allowing rigorous definitions of nonlinear operations involving Dirac measures.

\subsection{Overview and Main Contributions}

Previous studies have shown that, under the non-alignment condition \eqref{(uslov)}, classical entropy solutions to the one-dimensional problem may not exist. Instead, weak solutions may contain singular measures supported at the interface. In particular, $\delta$-shock solutions have been derived using shadow wave approximations \cite{krunic2021} and vanishing viscosity \cite{karlsen2025}. We note however that such a problem has not been considered in the multidimensional setting.

To this end, we extend the vanishing viscosity approach to the multi-dimensional problem \eqref{eqmain}, considering the regularized equation
\begin{align}
\label{viscosity}
\partial_t u_\varepsilon &+ \sum\limits_{k=1}^{d}\partial_{x_k}\Big(f^k_L(\hat{x}_k,u_\varepsilon) H_\varepsilon(-x_1+\varphi(\hat{x}_1)) \\& \qquad\qquad \ \ + f^k_R(\hat{x}_k,u_\varepsilon) H_\varepsilon(x_1-\varphi(\hat{x}_1))\Big) = \varepsilon\Delta u_\varepsilon, \notag \\
\label{icviscosity}
u_\varepsilon|_{t=0}&=0,
\end{align}
where $H_\varepsilon(x) = \omega(x/\varepsilon)$ is a smooth approximation of the Heaviside function and $\omega \in C^\infty(\mathbb{R})$ is a monotone transition function such that $H(x)-\omega(x/\varepsilon)$ is compactly supported. We demonstrate that the family $(u_\varepsilon)$ converges, in the sense of distributions, to a singular measure. More precisely, the following theorem is the main result of the paper.
\begin{theorem}
    \label{main-thm} The family of solutions $(u_\varepsilon)$ to \eqref{viscosity}, \eqref{icviscosity} is bounded in $L^1(\mathbb{R}^+\times \mathbb{R}^d)$. The weak subsequential  limit $u$ of $(u_\varepsilon)$ in the space of Radon measure ${\mathcal M}_{loc}(\mathbb{R}^+\times \mathbb{R}^d))$ is singular on the discontinuity interface $D$ i.e. measure of the set $D$ with respect to the measure $u$ is strictly less than zero
    $$
    u(D) < 0.
    $$
\end{theorem}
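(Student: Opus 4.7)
The plan is to adapt the one-dimensional vanishing-viscosity analysis of \cite{karlsen2025} to the multidimensional setting. The argument splits naturally into two parts: producing the $L^1$ bound together with a strictly negative integral identity for $u_\varepsilon$, and then showing that any weak-$*$ limit charges the discontinuity manifold $D$ with negative mass.

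\emph{$L^1$ bound.} I would first integrate the regularized equation \eqref{viscosity} over $\mathbb{R}^d$. Using the decay of $u_\varepsilon$ at spatial infinity---which follows from \eqref{L1cond} together with standard parabolic estimates for the viscous equation---the Laplacian contribution vanishes, and for each $k$ one evaluates $\int\partial_{x_k}F^k_\varepsilon\,dx$ by integrating in $x_k$ first. For $k=1$ the boundary limits as $x_1\to\pm\infty$ produce $\int_{\mathbb{R}^{d-1}}(f^1_R(\hat x_1,0)-f^1_L(\hat x_1,0))\,d\hat x_1$, which by the first inequality of \eqref{(uslov)} is at least $-c>0$. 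For $k\geq 2$ the monotonicity \eqref{decreasing} of $\varphi$ in $x_k$ yields analogous boundary contributions that, by the second inequality of \eqref{(uslov)}, are nonnegative. Summing gives
\begin{equation*}
\frac{d}{dt}\int_{\mathbb{R}^d}u_\varepsilon\,dx \leq c + o_\varepsilon(1)<0.
\end{equation*}
A complementary $L^1$ estimate, obtained by testing with a smooth approximation of $\mathrm{sgn}(u_\varepsilon)$ and controlling $\|\nabla\cdot F_\varepsilon(\cdot,0)\|_{L^1(\mathbb{R}^d)}$ via \eqref{L1cond} and $\varphi\in C^2$, then yields $\|u_\varepsilon(t,\cdot)\|_{L^1(\mathbb{R}^d)}\lesssim t$, the desired uniform $L^1_{loc}$ bound.

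\emph{Weak-$*$ limit and singular concentration.} The $L^1$ bound yields a subsequential weak-$*$ limit $u\in\mathcal{M}_{loc}(\mathbb{R}^+\times\mathbb{R}^d)$. To identify $u(D)<0$, fix $\delta>0$ and set $B_\delta(D)=\{x:|x_1-\varphi(\hat x_1)|<\delta\}$. For $\varepsilon<\delta$, on the complement of $B_\delta(D)$ the cutoffs $H_\varepsilon(\pm(x_1-\varphi))$ take only the values $0$ or $1$, so \eqref{viscosity} reduces there to a smooth viscous scalar conservation law with zero initial data. An $L^1$-contraction/energy estimate, combined with a bound on the trace of $u_\varepsilon$ on $\partial B_\delta(D)$ that vanishes as $\varepsilon\to 0$, implies $u_\varepsilon\to 0$ in $L^1_{loc}$ on the complement of any tubular neighborhood of $D$. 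Consequently the limit measure $u$ is supported on $\mathbb{R}^+\times D$, and combining with the integral identity above,
\begin{equation*}
u\bigl([0,T]\times D\bigr) = \lim_{\varepsilon\to 0}\int_0^T\!\!\int_{\mathbb{R}^d} u_\varepsilon\,dx\,dt \leq \tfrac{c}{2}\,T^2<0,
\end{equation*}
which (by outer regularity in $\delta$) establishes $u(D)<0$.

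\emph{Main obstacle.} The heart of the argument is the concentration step: quantifying that the mass produced in the $O(\varepsilon)$-interface layer does not leak out of the tubular neighborhood $B_\delta(D)$ as $\varepsilon\to 0$. This requires delicate parabolic layer estimates tracking how the $O(1/\varepsilon)$ source coming from $H'_\varepsilon$ is balanced by the $\varepsilon$-viscosity and by the transport encoded in $f_{L,R}^k$. The new multidimensional complication, absent in the one-dimensional analysis of \cite{karlsen2025}, is that the unit normal to $D$ varies with $\hat x_1$ through $\nabla\varphi$ (which is merely $C^1$), so the layer coordinates, the induced measure on $D$, and the orientation of the singular concentration all depend nontrivially on $\varphi$ and its gradient. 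Locally near $D$ one can nevertheless straighten the interface using normal coordinates and reduce the analysis to a quasi-1D layer problem, thereby reusing the estimates of \cite{karlsen2025} with additional geometric bookkeeping from \eqref{decreasing} and $\varphi\in C^2$.
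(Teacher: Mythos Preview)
Your $L^1$ estimate is fine and close to the paper's Lemma~3.1, and your identity $\int_0^T\!\int_{\mathbb{R}^d}u_\varepsilon\,dx\,dt\le \tfrac{c}{2}T^2<0$ is correct. The gap is in the concentration step. You assert that $u_\varepsilon\to 0$ in $L^1_{\rm loc}$ on $B_\delta(D)^c$ by invoking ``a bound on the trace of $u_\varepsilon$ on $\partial B_\delta(D)$ that vanishes as $\varepsilon\to 0$,'' but you give no mechanism for such a trace bound, and none is available: the only uniform control you have is the $L^1$ bound, which says nothing about boundary traces, and $(u_\varepsilon)$ is expected to blow up in $L^\infty$ near the interface. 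More to the point, the theorem asserts only $u(D)<0$, not that $u$ is supported on $D$; the vanishing of $u_\varepsilon$ off the tube is a strictly stronger statement that is neither needed nor established in the paper (nor in \cite{karlsen2025}, so ``reusing the estimates of \cite{karlsen2025}'' does not fit the approach you outline).

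The paper avoids this issue entirely by two ingredients you are missing. First, it proves the sign condition $u_\varepsilon\le 0$ a.e.\ (by multiplying \eqref{viscosity} by $\mathrm{sgn}_+(u_\varepsilon)$ and using the second line of \eqref{(uslov)}). Second, instead of trying to show the exterior is negligible, it tests \eqref{viscosity} against the concentrating weight $e^{-|x_1-\varphi(\hat x_1)|_{\sigma(\varepsilon)}/a(\varepsilon)}$ with $\varepsilon/a(\varepsilon)^2\to 0$; the viscous term is then $O(\varepsilon/a(\varepsilon)^2)\to 0$, and after the change of variables $z=(x_1-\varphi(\hat x_1))/a(\varepsilon)$ the flux term is bounded below by a strictly positive constant via the non-alignment condition \eqref{(uslov)}, yielding $\limsup_{\varepsilon}\int_0^t\!\int u_\varepsilon\,e^{-|x_1-\varphi|/a(\varepsilon)}\,dx\,d\tau\lesssim -t^2$. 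The sign condition then allows one to pass from this weighted integral to $\int_{A_\delta}u_\varepsilon$ for any fixed tube $A_\delta$ (since $u_\varepsilon\le 0$, $0\le e^{-\cdots}\le 1$, and the contribution outside $A_\delta$ is $O(e^{-\delta/a(\varepsilon)}\|u_\varepsilon\|_{L^1})\to 0$), and finally to $u(D)<0$ by weak-$*$ semicontinuity and outer regularity. The exponential weight plus the sign lemma is precisely the device that replaces the trace/boundary-layer analysis you identify as the main obstacle.
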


The paper is organized as follows. After the Introduction, we introduce notions and notations that we are going to use. In Section 3, we prove the main theorem of the paper. The paper is completed by concluding remarks where we try to explain a surprising fact that the singular measure naturaly arises although the described process is controlled by nonlinear functions not allowing a natural action on singular measures (such as e.g. $\delta^2$ for the Dirac distribution; compare with sticky particles system \cite{natile2010}, presureless gas dynamics \cite{bouchut1994}, chromatography system \cite{wang}, Chaplygin gas \cite{nedeljkov2017} etc.)

\section{Notions and Notation}

In this section, we collect the main symbols, functions, and analytical tools used throughout the paper. These include regularization techniques, distributional concepts, and functional analytic settings central to our treatment of scalar conservation laws with discontinuous fluxes in multi-dimensional space. The tools and conventions introduced here support the vanishing viscosity analysis, weak convergence framework, and singular limit passage developed in the subsequent sections.

\subsection{Heaviside Function and Mollifications}

The Heaviside function \( H:\mathbb{R} \to \{0,1\} \) is defined by
\[
H(x) = 
\begin{cases}
1, & x \geq 0, \\
0, & x < 0.
\end{cases}
\]
To approximate the discontinuity at the origin, we use a smooth mollified family \( \{H_\varepsilon\}_{\varepsilon > 0} \) such that
\begin{equation}
\label{izlgalacnihesijan}
H_\varepsilon(x) = \omega\left( \frac{x}{\varepsilon} \right),
\end{equation}
where \( \omega \in C^\infty(\mathbb{R}) \) is a monotone function satisfying \( \omega(z) = 0 \) for \( z \leq -1 \) and \( \omega(z) = 1 \) for \( z \geq 1 \). Then \( H_\varepsilon \to H \) pointwise as \( \varepsilon \to 0 \).

The distributional derivative of \( H_\varepsilon \), denoted by
\[
\delta_\varepsilon(x) := H_\varepsilon'(x),
\]
approximates the Dirac delta distribution \( \delta \) and we shall assume that $\delta_\varepsilon$ is an {\bf even function} for every $\varepsilon$ which is an essential assumption in the course of the proof. 

We recall the identity 
\[
\langle z \delta(z), \theta(z) \rangle = 0 \quad \forall \theta \in \mathcal{D}(\mathbb{R}).
\]

\subsection{Sign Functions and Their Variants}

The standard signum function is defined as
\begin{equation}
\sgn(x) = 
\begin{cases}
-1, & x < 0, \\
0, & x = 0, \\
1, & x > 0.
\end{cases}
\label{eq:sgn_function}
\end{equation}

We also introduce the one-sided variants:
\[
\sgn_{+}(x) = \begin{cases}
1, & x > 0, \\
0, & x \leq 0,
\end{cases}
\qquad
\sgn_{-}(x) = \begin{cases}
-1, & x < 0, \\
0, & x \geq 0.
\end{cases}
\]
These functions are used to express entropy conditions and local directional behavior near discontinuities.




\subsection{Flux Functions with Discontinuity Interfaces}

We consider fluxes that are discontinuous across a moving interface. For each spatial direction \( k = 1, \dots, d \), the flux \( f^k: \mathbb{R}^d \to \mathbb{R} \) is given by
\[
f^k(\hat{x}_k, u) =
\begin{cases}
f_L^k(\hat{x}_k, u), & x_1 < \varphi(\hat{x}_1), \\
f_R^k(\hat{x}_k, u), & x_1 > \varphi(\hat{x}_1),
\end{cases}
\]
where \( \varphi: \mathbb{R}^{d-1} \to \mathbb{R} \) defines a $C^2(\mathbb{R}^{d-1})$--interface and \( f_L^k, f_R^k \in C^1(\mathbb{R}^{d}) \) are spatially dependent flux components.

\subsection{Function Spaces}

The following function spaces are used throughout the analysis:
\begin{itemize}
    \item \( W^{2,2}(\mathbb{R}^d) \): the Sobolev space of functions with square-integrable second derivatives.
    \item \( L^2([0,T]; W^{2,2}(\mathbb{R}^d)) \): the Bochner space of time-dependent functions with values in \( W^{2,2}(\mathbb{R}^d) \), square-integrable in time.
    \item \( L^\infty([0,T]; L^1(\mathbb{R}^d)) \): functions essentially bounded in time, with values in \( L^1(\mathbb{R}^d) \).
    \item \( \mathcal{M}_{loc}((0,T) \times \mathbb{R}^d) \): the space of locally finite Radon measures, used to describe weak limits of sequences with possible concentration effects.
\end{itemize}

\subsection{Distributional Derivatives and Non-Smooth Terms}

Operations on quantities such as \( |u_\varepsilon| \), \( \sgn(u_\varepsilon) \), and \( \delta(x) \) are interpreted in the sense of distributions. All differentiation operations involving non-smooth functions are rigorously defined using test functions in \( \mathcal{D}(\mathbb{R}^d) \). Test functions are always assumed to be smooth with compact support in both time and space unless specified otherwise.

\subsection{Regularized Absolute Value}

To avoid non-differentiability at zero, we introduce the regularized absolute value \( |x|_\varepsilon \), defined as
\[
|x|_\varepsilon =
\begin{cases}
|x|, & |x| \ge \varepsilon, \\[1.2ex]
\displaystyle \frac{3}{8} \cdot \frac{x^6}{\varepsilon^5}
- \frac{5}{4} \cdot \frac{x^4}{\varepsilon^3}
+ \frac{15}{8} \cdot \frac{x^2}{\varepsilon}, & |x| < \varepsilon,
\end{cases}
\]
which is $C^2(\mathbb{R})$ and approximates \( |x| \) as \( \varepsilon \to 0 \). This function is used in estimates involving weak lower semicontinuity. { We will denote the derivative of} \( |x|_\varepsilon \) by \( \sgn_\varepsilon(x) \), i.e.,
\[
\sgn_\varepsilon(x) :=(|x|_\varepsilon)',
\]
which represents a \( C^2(\mathbb{R}) \) smooth approximation of the sign function defined in~\eqref{eq:sgn_function}.

\subsection{Sign $\lesssim$}
We write \( a \lesssim b \) if there exists a constant \( c > 0 \) independent of $a$ and $b$ such that \( a \leq b\cdot c \).

\subsection{Vector notation with $[\cdot]$}

By $[a,\hat{x}_k]$ we denote the vector of the form:
\begin{equation}
\label{k-notation}
[a,\hat{x}_k] = (x_1, \dots, x_{k-1}, a, x_{k+1}, \dots, x_d),
\end{equation}
that is, the vector obtained by replacing the $k$-th component of $x$ with $a$, while all other components remain unchanged.

\subsection{Weak Convergence Notation}

We write
\[
u_\varepsilon \overset{\star}{\rightharpoonup} u \quad \text{in } \mathcal{M}((0,T) \times \mathbb{R}^d)
\]
to denote weak-$\star$ convergence of \( u_\varepsilon \) to a measure-valued limit \( u \).

\subsection{Generic Constants and weak lower semicontinuity}

Throughout the paper, the symbol \( \tilde{C} \) denotes a generic positive constant that may vary from line to line but is always independent of \( \varepsilon \), unless stated otherwise.

\begin{definition}[Weak Lower Semicontinuity]
Let \( X \) be a Banach space, and let \( f : X \to \mathbb{R} \cup \{+\infty\} \) be a functional. Then \( f \) is said to be \emph{weakly lower semicontinuous} if, for every sequence \( \{x_n\} \subset X \) such that \( x_n \rightharpoonup x \) weakly in \( X \), it holds that
\[
f(x) \leq \liminf_{n \to \infty} f(x_n).
\]
\end{definition}

\begin{theorem}\label{Ajlan}[Weak Lower Semicontinuity for Measures]
Let $\{\mu_n\}_{n\in \mathbb{N}}$ be a sequence of Radon measures on a locally compact Hausdorff space $X$ that converges weakly* to a Radon measure $\mu$, i.e.,
\[
\mu_n \xrightarrow{w^*} \mu \quad \text{in } \mathcal{M}(X).
\]
Then for every lower semicontinuous function $f : X \to [0, +\infty]$, the following inequality holds:
\[
\int\limits_X f \, d\mu \leq \liminf_{n \to \infty} \int\limits_X f \, d\mu_n.
\]
\end{theorem}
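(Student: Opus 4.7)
The plan is to reduce the inequality to the case of continuous, compactly supported test functions, where the weak-$\star$ convergence of $\mu_n$ to $\mu$ applies directly, and then pass to $f$ by approximation from below. Concretely, I would first invoke the standard structural fact that on a locally compact Hausdorff space every nonnegative lower semicontinuous $f$ can be written as
\[
f(x) = \sup\{g(x) \, : \, g \in C_c(X), \ 0 \leq g \leq f\},
\]
and, under the mild $\sigma$-compactness that holds in all spaces $X$ of interest to us (such as $\mathbb{R}^+\times \mathbb{R}^d$), this supremum can be realized along an increasing sequence $g_k \uparrow f$ with $g_k \in C_c(X)$, $0\le g_k\le f$. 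This is the only ``soft'' ingredient; it is a consequence of the Urysohn lemma applied to the open superlevel sets $\{f > t\}$ combined with a sup over rationals $t$.

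Next, for each fixed admissible $g \in C_c(X)$, the definition of weak-$\star$ convergence in $\mathcal{M}(X)$ yields
\[
\int_X g \, d\mu \;=\; \lim_{n\to\infty} \int_X g \, d\mu_n.
\]
Since $\mu_n$ is a positive Radon measure and $0 \le g \le f$, each term on the right is bounded above by $\int_X f\, d\mu_n$, so
\[
\int_X g \, d\mu \;\le\; \liminf_{n\to\infty} \int_X f \, d\mu_n.
\]
Taking the supremum over such $g$ on the left, or equivalently applying the monotone convergence theorem along $g_k \uparrow f$ with respect to the positive measure $\mu$, transforms the left-hand side into $\int_X f \, d\mu$, which is exactly the desired inequality.

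The only delicate step, and the one I would expect to require a careful word, is the identification
\[
\sup_{g \in C_c(X),\ 0\leq g\leq f} \int_X g \, d\mu \;=\; \int_X f\, d\mu,
\]
which rests on the inner regularity of the Radon measure $\mu$ together with monotone convergence applied to $g_k \uparrow f$. Once this is in place, the remainder of the proof is purely a manipulation of $\liminf$'s: the $\liminf$ preserves the ordering from each fixed $g$, and passing to the supremum on the left does not destroy the inequality. A secondary point to note is the implicit assumption that all $\mu_n$ and $\mu$ are \emph{positive} Radon measures; this is needed both to guarantee that $\int g\, d\mu_n \le \int f\, d\mu_n$ and for monotone convergence to apply. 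In our application this will be clear from the context in which Theorem~\ref{Ajlan} is invoked.
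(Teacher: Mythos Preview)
The paper does not actually supply a proof of Theorem~\ref{Ajlan}; it is stated in the preliminary ``Notions and Notation'' section as a standard analytic fact and then simply invoked later in the proof of Theorem~\ref{main-thm}. So there is no ``paper's own proof'' to compare against.

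That said, your argument is correct and is precisely the textbook route: approximate the nonnegative lower semicontinuous $f$ from below by functions in $C_c(X)$, use weak-$\star$ convergence on each such $g$, exploit positivity of the $\mu_n$ to pass from $g$ to $f$ on the right, and finally recover $\int f\,d\mu$ on the left via the identity
\[
\int_X f\,d\mu \;=\; \sup\Big\{\int_X g\,d\mu : g\in C_c(X),\ 0\le g\le f\Big\},
\]
which holds for any positive Radon measure on a locally compact Hausdorff space. Two minor remarks. First, the $\sigma$-compactness you invoke to obtain an increasing \emph{sequence} $g_k\uparrow f$ is convenient but not essential: the supremum formulation over all admissible $g$ already gives the result in full generality, so you need not restrict the class of spaces. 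Second, your observation that positivity of the measures is implicitly required is exactly right and worth keeping; in the paper's application (equation~\eqref{eq:2.15}) the measures in play are $-u_\varepsilon$ and $-u$, which are nonnegative by Lemma~\ref{Lemma 2.2}, so the hypothesis is indeed met.
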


\section{A vanishing viscosity approximation}

In this section, we consider the vanishing viscosity approximation \eqref{viscosity}, \eqref{icviscosity} of the original problem \eqref{eqmain} with zero initial data. We start with the following lemma.


\begin{lemma}\label{lema1}
For each fixed $\varepsilon > 0$, the parabolic problem \eqref{viscosity} with smooth flux and homogeneous initial data, has a unique solution satisfying
\begin{equation}
\label{regularity-eps}    
u_\varepsilon \in L^2\left([0, T]; W^{2,2}(\mathbb{R}^d)\right), \quad \text{for any } T > 0.
\end{equation}
The solution $u_\varepsilon$ is uniformly bounded in $L^\infty([0, T]; L^1(\mathbb{R}^d))$:
\begin{equation}
\label{L1-bnd}
    \|u_\varepsilon(t, \cdot)\|_{L^1(\mathbb{R}^d)} \leq \tilde{C}_T, \quad t \in [0, T],
\end{equation} where $\tilde{C}_T$ is a constant independent of $\varepsilon$, for any $T > 0$.\\
\end{lemma}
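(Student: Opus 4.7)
The plan is to handle the two assertions separately. For the well-posedness claim~\eqref{regularity-eps}, for each fixed $\varepsilon > 0$ equation~\eqref{viscosity} is a semilinear uniformly parabolic PDE whose spatial coefficients are smooth (since $H_\varepsilon \in C^\infty$ and $\varphi\in C^2$) and bounded, $f_L^k,f_R^k$ are bounded $C^1$ functions of $u$, and the initial datum vanishes. Existence and uniqueness of a strong solution in $L^2([0,T];W^{2,2}(\mathbb{R}^d))$ then follow from classical parabolic theory --- for instance, via a contraction/fixed-point argument on the mild formulation based on the heat semigroup, or a Galerkin approximation combined with standard parabolic energy estimates.

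For the bound~\eqref{L1-bnd}, the strategy is a sign-function test. I introduce an auxiliary regularization $\eta>0$ (independent of $\varepsilon$) and test~\eqref{viscosity} with $\sgn_\eta(u_\varepsilon):=(|\cdot|_\eta)'(u_\varepsilon)$. The viscous term is non-positive,
\[
\int_{\mathbb{R}^d}\sgn_\eta(u_\varepsilon)\,\varepsilon\Delta u_\varepsilon\,dx
= -\varepsilon\int_{\mathbb{R}^d}\sgn_\eta'(u_\varepsilon)|\nabla u_\varepsilon|^2\,dx\le 0,
\]
since $\sgn_\eta$ is non-decreasing. For each flux term, one integration by parts in $x_k$ combined with the renormalization identity (introducing the primitive $G^k_\eta(x,w):=\int_0^w\sgn_\eta'(v)[f_L^k(\hat{x}_k,v)H_\varepsilon(-x_1+\varphi)+f_R^k(\hat{x}_k,v)H_\varepsilon(x_1-\varphi)]\,dv$) yields
\[
\int_{\mathbb{R}^d}\sgn_\eta(u_\varepsilon)\,\partial_{x_k}\bigl(f_L^k H_\varepsilon(-x_1+\varphi)+f_R^k H_\varepsilon(x_1-\varphi)\bigr)dx
= \int_{\mathbb{R}^d}(\partial_{x_k}G^k_\eta)(x,u_\varepsilon)\,dx,
\]
with the inner $\partial_{x_k}$ acting on the explicit $x$-dependence of $G^k_\eta(x,w)$ (with $w$ held fixed). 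Using that $f_{L,R}^k(\hat{x}_k,v)$ does not depend on $x_k$ and that $\delta_\varepsilon$ is even, the integrand is pointwise controlled by $\tilde{C}(|f_L^k|+|f_R^k|)(1+|\varphi_{x_k}|)\,\delta_\varepsilon(x_1-\varphi(\hat{x}_1))$.

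The crucial step is then the $\varepsilon$-uniform bound on $\int_{\mathbb{R}^d}|(\partial_{x_k}G^k_\eta)(x,u_\varepsilon)|\,dx$. For $k=1$, Fubini in $x_1$ absorbs the unit-mass spike $\delta_\varepsilon$, leaving $\int_{\mathbb{R}^{d-1}}\sup_v(|f_L^1|+|f_R^1|)(\hat{x}_1,v)\,d\hat{x}_1<\infty$ by~\eqref{L1cond}. For $k\ge 2$, the substitution $y=x_1-\varphi(\hat{x}_1)$ (Jacobian $1$) separates $\delta_\varepsilon(y)\,dy$ from the remaining $(d-1)$-dimensional integral; a further change of variable $x_k\mapsto s=y+\varphi(\hat{x}_1)$ at the fixed remaining coordinates --- legitimate because $\varphi_{x_k}<0$ by~\eqref{decreasing} --- has Jacobian $|\varphi_{x_k}|^{-1}$, which cancels precisely the $|\varphi_{x_k}|$ weight and leaves $\int_{\mathbb{R}^{d-1}}\sup_v(|f_L^k|+|f_R^k|)(\hat{x}_k,v)\,d\hat{x}_k<\infty$, a bound uniform in $y$ and again finite by~\eqref{L1cond}. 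Summing over $k=1,\dots,d$ one obtains $\frac{d}{dt}\int_{\mathbb{R}^d}|u_\varepsilon|_\eta\,dx\le\tilde{C}$ uniformly in $\eta,\varepsilon$; integrating in $t\in[0,T]$ from the zero initial datum and letting $\eta\to 0^+$ via monotone convergence delivers~\eqref{L1-bnd}.

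The main obstacle is precisely this $\varepsilon$-independence: naively the concentration $\delta_\varepsilon\sim\varepsilon^{-1}$ would destroy any uniform bound, and it is only the combination of the transverse $L^1$-integrability~\eqref{L1cond} with the monotonicity~\eqref{decreasing} (the latter furnishing the change of variable that cancels the offending $|\varphi_{x_k}|$) that secures the $\varepsilon$-uniformity. Justifying the two integrations by parts at spatial infinity is a secondary technical issue, settled in the standard way by a compactly supported cutoff once the regularity~\eqref{regularity-eps} is available.
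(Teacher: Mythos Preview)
Your proposal is correct and follows essentially the same route as the paper: test the equation with the sign function, isolate the $\delta_\varepsilon$ source terms arising from the $x$-derivative of $H_\varepsilon(\pm(x_1-\varphi))$, and secure the $\varepsilon$-uniform bound by the change of variables that exploits the monotonicity $\varphi_{x_k}<0$ together with the transverse integrability~\eqref{L1cond}. The only differences are cosmetic---the paper works directly with $\sgn(u_\varepsilon)$ and the Kru\v{z}hkov entropy flux $\sgn(u_\varepsilon)\bigl(f^k(\hat{x}_k,u_\varepsilon)-f^k(\hat{x}_k,0)\bigr)$, whereas you regularize via $\sgn_\eta$ and the primitive $G^k_\eta$ before letting $\eta\to0$; and the paper performs the change of variables in one step $y_k=\varphi(\hat{x}_1)$ rather than your two-step version, but the cancellation of the Jacobian $|\varphi_{x_k}|$ is identical.
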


\textbf{Proof}
The existence of solution satisfying \eqref{regularity-eps} follows from \cite[Chapter 5]{Ladyzenskaja:1967fy}. 
To prove \eqref{L1-bnd}, we first note that for $k=1$
\begin{align}
\label{l1-1}
    \partial_{x_{1}}\Big(&\sgn(u_\varepsilon)\big((f^1_L(\hat{x}_1,u_\varepsilon)-f_L^1(\hat{x}_1,0)) H_\varepsilon(-x_1+\varphi(\hat{x}_1))  \\& \qquad\qquad + (f^1_R(\hat{x}_1,u_\varepsilon)-f_R^1(\hat{x}_1,0)) H_\varepsilon(x_1-\varphi(\hat{x}_1))\big)\Big)  \notag\\
    &= \sgn(u_\varepsilon)\partial_{x_1}\left(f^1_L(\hat{x}_k,u_\varepsilon) H_\varepsilon(-x_1+\varphi(\hat{x}_1)) + f^1_R(\hat{x}_1,u_\varepsilon) H_\varepsilon(x_1-\varphi(\hat{x}_1))\right) \notag\\
    &\qquad \qquad - \sgn(u_\varepsilon)(f_R^1(\hat{x}_1,0)-f_L^1(\hat{x}_1,0))\delta_\varepsilon(x_1-\varphi(\hat{x}_1)),
    \notag
\end{align} while for $k\neq 1$
\begin{align}
\label{l1-2}
    \partial_{x_{k}}\Big(&\sgn(u_\varepsilon)\big((f^k_L(\hat{x}_k,u_\varepsilon)-f_L^k(\hat{x}_k,0)) H_\varepsilon(-x_1+\varphi(\hat{x}_1))  \\&\qquad\qquad + (f^k_R(\hat{x}_k,u_\varepsilon)-f_R^k(\hat{x}_k,0)) H_\varepsilon(x_1-\varphi(\hat{x}_1))\big)\Big)  \notag\\
    &= \sgn(u_\varepsilon)\partial_{x_k}\left(f^k_L(\hat{x}_k,u_\varepsilon) H_\varepsilon(-x_1+\varphi(\hat{x}_1)) + f^k_R(\hat{x}_k,u_\varepsilon) H_\varepsilon(x_1-\varphi(\hat{x}_1))\right) \notag\\
    &\qquad \qquad - \sgn(u_\varepsilon)(f_L^k(\hat{x}_k,0)-f_R^L(\hat{x}_k,0))\delta_\varepsilon(x_1-\varphi(\hat{x}_1))\frac{\partial \varphi(\hat{x}_1)}{\partial x_k}.
    \notag
\end{align} Next, we multiply \eqref{viscosity} by $\sgn(u_\varepsilon)$. We have
\begin{align}
\label{(5)}
\partial_t |u_\varepsilon| &+ \sum\limits_{k=1}^{d}\sgn(u_\varepsilon)\partial_{x_k}\Big(f^k_L(\hat{x}_k,u_\varepsilon) H_\varepsilon(-x_1+\varphi(\hat{x}_1)) \\&\qquad\qquad+ f^k_R(\hat{x}_k,u_\varepsilon) H(x_1-\varphi(\hat{x}_1))\Big) = \varepsilon\Delta |u_\varepsilon|-2\varepsilon\delta(u_\varepsilon)|\nabla u_\varepsilon|^2.
\notag
\end{align}
Combining equations \eqref{l1-1}, \eqref{l1-2}, and \eqref{(5)} and taking into account that, in sense of distribution,  
$$
2\varepsilon\delta(u_\varepsilon)|\nabla u_\varepsilon|^2\geq 0,
$$   we get:
\begin{align}
\label{comb-345}
&\partial_t |u_\varepsilon| 
+ \partial_{x_{1}}\Big(\sgn(u_\varepsilon)\big((f^1_L(\hat{x}_1,u_\varepsilon)-f_L^1(\hat{x}_1,0)) H_\varepsilon(-x_1+\varphi(\hat{x}_1))  \\
&\qquad \qquad \ \  + (f^1_R(\hat{x}_1,u_\varepsilon)-f_R^1(\hat{x}_1,0)) H_\varepsilon(x_1-\varphi(\hat{x}_1))\big)\Big) \nonumber\\&
+ \sgn(u_\varepsilon)(f_R^1(\hat{x}_1,0)-f_L^1(\hat{x}_1,0))\delta_\varepsilon(x_1-\varphi(\hat{x}_1)) \notag \\&
+\sum\limits_{k=2}^{d}\partial_{x_{k}}\Big(\sgn(u_\varepsilon)\big((f^k_L(\hat{x}_k,u_\varepsilon)-f_L^k(\hat{x}_k,0)) H_\varepsilon(-x_1+\varphi(\hat{x}_1)) \notag \\
&\qquad\qquad \ \  + (f^k_R(\hat{x}_k,u_\varepsilon)-f_R^k(\hat{x}_k,0)) H_\varepsilon(x_1-\varphi(\hat{x}_1))\big)\Big) \nonumber \\
&+ \sum\limits_{k=2}^d \sgn(u_\varepsilon)(f_L^k(\hat{x}_k,0)-f_R^k(\hat{x}_k,0))\delta_\varepsilon(x_1-\varphi(\hat{x}_1))\frac{\partial \varphi(\hat{x}_1)}{\partial x_k} 
\leq \varepsilon\Delta |u_\varepsilon|.
\notag
\end{align} 
We denote $Q^d(K)=[-K,K]^d$. Integrating \eqref{comb-345} over $[0,T]\times Q^d(K)$ leads to: 

\begin{equation}\label{1.7}
\begin{aligned}
&\int\limits_{0}^{T}\int\limits_{Q^d(K)} \partial_t |u_\varepsilon|\,dx\,dt  \\
&\quad+ \int\limits_{0}^{T}\int\limits_{Q^d(K)}\partial_{x_{1}}\Big(\sgn(u_\varepsilon)\big((f^1_L(\hat{x}_1,u_\varepsilon)-f_L^1(\hat{x}_1,0)) H_\varepsilon(-x_1+\varphi(\hat{x}_1)) \\
&\qquad\qquad\qquad\qquad + (f^1_R(\hat{x}_1,u_\varepsilon)-f_R^1(\hat{x}_1,0)) H_\varepsilon(x_1-\varphi(\hat{x}_1))\big)\Big)\,dx\,dt \\
&\quad+ \int\limits_{0}^{T}\int\limits_{Q^d(K)}\sgn(u_\varepsilon)(f_R^1(\hat{x}_1,0)-f_L^1(\hat{x}_1,0))\delta_\varepsilon(x_1-\varphi(\hat{x}_1))\,dx\,dt \\
&\quad+ \sum\limits_{k=2}^{d}\int\limits_{0}^{T}\int\limits_{Q^d(K)}\partial_{x_{k}}\Big(\sgn(u_\varepsilon)\big((f^k_L(\hat{x}_k,u_\varepsilon)-f_L^k(\hat{x}_k,0)) H_\varepsilon(-x_1+\varphi(\hat{x}_1))  \\
&\qquad\qquad\qquad\qquad\qquad\qquad + (f^k_R(\hat{x}_k,u_\varepsilon)-f_R^k(\hat{x}_k,0)) H_\varepsilon(x_1-\varphi(\hat{x}_1))\big)\Big)\,dx\,dt \\
&\quad+ \sum\limits_{k=2}^d\int\limits_{0}^{T}\int\limits_{Q^d(K)} \sgn(u_\varepsilon)(f_L^k(\hat{x}_k,0)-f_R^k(\hat{x}_k,0))\delta_\varepsilon(x_1-\varphi(\hat{x}_1))\frac{\partial \varphi(\hat{x}_1)}{\partial x_k} \,dx\,dt \\
&\quad\leq \int\limits_{0}^{T}\int\limits_{Q^d(K)}\varepsilon\Delta |u_\varepsilon|\,dx\,dt.
\end{aligned}
\end{equation}

Since $u_\varepsilon \in L^2([0, T ]; W^{2,2}(\mathbb{R}^d))$, we have for all $k,j =1,\dots,d$
\begin{equation*}
    \begin{split}
        &\esslim\limits_{K\to \infty} u_\varepsilon(t, [\pm K, \hat{x}_k]):= \esslim\limits_{K\to \infty} u_\varepsilon(t,x_1,\dots,x_{k-1},\pm K,x_{k+1},\dots,x_d)=0,\\
        &\esslim\limits_{K\to \infty} \pa_{x_j}u_\varepsilon(t, [\pm K, \hat{x}_k]):= \esslim\limits_{K\to \infty} \pa_{x_j} u_\varepsilon(t,x_1,\dots,x_{k-1},\pm K,x_{k+1},\dots,x_d)=0,
    \end{split}
\end{equation*} and thus, by the continuity of the functions $f$ and $u_\varepsilon$, we have that (pointwise)
\begin{equation*}
\begin{split}
&f^k_L(\hat{x}_k,u_\varepsilon(t, [\pm K, \hat{x}_k]))-f_L^k(\hat{x}_k,0) \to 0, \quad \text{as } K \to +\infty;\\ 
&f^k_R(\hat{x}_k,u_\varepsilon(t, [\pm K, \hat{x}_k]))-f_L^R(\hat{x}_k,0) \to 0 \quad \text{as } K \to +\infty.
\end{split}
\end{equation*}
Also, without loss of generality we can assume that
\[
\int\limits_{\mathbb{R}} \delta_\varepsilon(x) \, dx \leq 1.
\]

Thus, using the Newton-Leibnitz formula and Lebesgue dominated convergence theorem, and letting $K \to +\infty$ in \eqref{1.7}, we get: 
\begin{align}
&\int\limits_{R^d} |u_\varepsilon|\,dx
\leq\sum\limits_{k=2}^{d}\int\limits_{0}^{T}\int\limits_{R^{d-1} }\sgn(u_\varepsilon)(f_L^k(\hat{x}_k,0)-f_R^k(\hat{x}_k,0))\times\\& \qquad\qquad\qquad\qquad\times\int\limits_{R}\delta_\varepsilon(x_1-\varphi(\hat{x}_1))\frac{\partial \varphi(\hat{x}_1)}{\partial x_k} \, dx_k\,d\hat{x}_k\,dt\nonumber \\
 & \ \ + \int\limits_{0}^{T}\int\limits_{R^{d-1}}\sgn(u_\varepsilon)(f_R^1(\hat{x}_1,0)-f_L^1(\hat{x}_1,0))\int\limits_R\delta_\varepsilon(x_1-\varphi(\hat{x}_1))\, dx_1\, d\hat{x}_1\,dt 
=0. \nonumber
\end{align} Introducing the change of variables in each of the integrals for $k=2,\dots,d$:
\begin{equation}
    \label{cov}
y_k=\varphi(\hat{x}_1), \ \ \hat{y}_k=\hat{x}_k,
\end{equation}  and taking into account
$$
\frac{\partial \varphi(\hat{x}_1)}{\partial x_k}<0,
$$we reach to
\begin{equation*}
    \begin{split}
        &\int\limits_{R^d} |u_\varepsilon|\,dx
\\&\leq\sum\limits_{k=2}^{d}\int\limits_{0}^{T}\int\limits_{R^{d-1} }\sgn(\tilde{u}_\varepsilon)(f_R^k(\hat{y}_k,0)-f_L^k(\hat{y}_k,0))\int\limits_{R}\delta_\varepsilon(y_1-y_k) \, dy_k d\hat{y}_k \,dt\\
 &+ \int\limits_{0}^{T}\int\limits_{R^{d-1}}\sgn(u_\varepsilon)(f_R^1(\hat{x}_1,0)-f_L^1(\hat{x}_1,0))\int\limits_R\delta_\varepsilon(x_1-\varphi(\hat{x}_1))\, dx_1\, d\hat{x}_1\,dt 
=0,
    \end{split}
\end{equation*} where $\tilde{u}_\varepsilon(y)={u}_\varepsilon(x)$ and the connection between $x$ and $y$ is given by \eqref{cov}.
This leads us to
\begin{align}
\int\limits_{R^d} |u_\varepsilon|\,dx
\leq T \sum\limits_{k=1}^d\int\limits_{R^{d-1}} |f_L^k(\hat{x}_k,0)-f_R^k(\hat{x}_k,0)|d\hat{x}_k\leq \tilde{C}_T,
\end{align} where the last inequality follows from the integrability assumptions on $f_{L,R}^k(\hat{x}_k,0)$. This completes the proof. $\Box$

In similar way we prove the following lemma:

\begin{lemma} 
\label{Lemma 2.2} 
Let \( u_\varepsilon \) be a solution to \eqref{viscosity}, \eqref{icviscosity}, and assume that 
\eqref{(uslov)} holds. Then
\[
u_\varepsilon(t, x) \leq 0 \quad \text{for a.e. } (t, x) \in \mathbb{R}_+ \times \mathbb{R}^d.
\]

\end{lemma}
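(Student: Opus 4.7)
The plan is to run through the proof of Lemma~\ref{lema1} but with the sign function $\sgn$ replaced by the one-sided sign $\sgn_+$, so as to obtain an integral identity for the positive part $(u_\varepsilon)_+:=\max(u_\varepsilon,0)$ rather than for $|u_\varepsilon|$, and then to read off the sign of the surviving interface contributions from \eqref{(uslov)} and \eqref{decreasing} in order to conclude $(u_\varepsilon)_+\equiv 0$.

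Concretely, I would multiply \eqref{viscosity} by a $C^2$ regularisation of $\sgn_+(u_\varepsilon)$ (constructed as in Section~2.6 from $|\cdot|_\varepsilon$), use $\sgn_+(u_\varepsilon)\,\partial_t u_\varepsilon=\partial_t (u_\varepsilon)_+$, discard the non-negative viscosity remainder $\varepsilon\,\delta(u_\varepsilon)|\nabla u_\varepsilon|^2$ exactly as in \eqref{(5)}, and apply the identities \eqref{l1-1}--\eqref{l1-2} with $\sgn$ replaced by $\sgn_+$. The substitution is legitimate because the only property of $\sgn$ used in those two identities is that the bracket $(f_L^k-f_L^k(\cdot,0))H_\varepsilon(-x_1+\varphi)+(f_R^k-f_R^k(\cdot,0))H_\varepsilon(x_1-\varphi)$ vanishes on $\{u_\varepsilon=0\}$, so that the chain-rule term $\partial_{x_k}\sgn_+(u_\varepsilon)\cdot[\,\cdot\,]$ again drops out. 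Integrating the resulting distributional inequality over $[0,T]\times Q^d(K)$, sending $K\to\infty$ via the $L^2(0,T;W^{2,2})$ regularity and the integrability \eqref{L1cond} as in Lemma~\ref{lema1}, and using the zero initial datum \eqref{icviscosity}, I expect to reach
\begin{align*}
&\int_{\mathbb{R}^d}(u_\varepsilon)_+(T,x)\,dx+\int_0^T\!\!\int_{\mathbb{R}^d}\sgn_+(u_\varepsilon)\bigl(f_R^1(\hat{x}_1,0)-f_L^1(\hat{x}_1,0)\bigr)\delta_\varepsilon(x_1-\varphi(\hat{x}_1))\,dx\,dt \\
&\qquad+\sum_{k=2}^{d}\int_0^T\!\!\int_{\mathbb{R}^d}\sgn_+(u_\varepsilon)\bigl(f_L^k(\hat{x}_k,0)-f_R^k(\hat{x}_k,0)\bigr)\frac{\partial\varphi(\hat{x}_1)}{\partial x_k}\,\delta_\varepsilon(x_1-\varphi(\hat{x}_1))\,dx\,dt\leq 0.
\end{align*}

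The conclusion will then be immediate: by the second inequality in \eqref{(uslov)} one has $f_R^k(\hat{x}_k,0)-f_L^k(\hat{x}_k,0)\geq 0$ for every $k$, while \eqref{decreasing} gives $\partial\varphi/\partial x_k<0$ for $k\geq 2$, so each interface integrand is non-negative; combined with $(u_\varepsilon)_+\geq 0$, $\sgn_+(u_\varepsilon)\geq 0$ and $\delta_\varepsilon\geq 0$, the displayed inequality forces the bulk integral $\int(u_\varepsilon)_+(T,x)\,dx$ to vanish for every $T>0$, whence $u_\varepsilon\leq 0$ a.e. The main technical hurdle I foresee is the rigorous handling of the chain rule for the non-smooth $\sgn_+$ at $\{u_\varepsilon=0\}$ and of the distributional product $\delta(u_\varepsilon)|\nabla u_\varepsilon|^2$, but this is dealt with exactly as in Lemma~\ref{lema1}, by first working with a $C^2$ smoothing of $\sgn_+$ built from $|\cdot|_\varepsilon$, passing to the limit in the smoothing parameter, and discarding the viscosity remainder by its non-negativity.
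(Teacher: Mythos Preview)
Your proposal is correct and follows essentially the same route as the paper: replace $\sgn$ by $\sgn_+$ in the computations of Lemma~\ref{lema1}, discard the non-negative viscosity remainder, and use the second line of \eqref{(uslov)} together with \eqref{decreasing} to see that the surviving interface terms are non-negative, forcing $\int(u_\varepsilon)_+\,dx\le0$. The only cosmetic difference is that the paper first performs the change of variables \eqref{cov} (as in Lemma~\ref{lema1}) to reduce all interface contributions to a single form $\sgn_+(u_\varepsilon)(f_L^k(\hat{x}_k,0)-f_R^k(\hat{x}_k,0))$, whereas you keep the factor $\partial\varphi/\partial x_k$ and read off its sign directly from \eqref{decreasing}; both lead to the same conclusion.
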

{\bf Proof:} The strategy of the proof is similar to that of Lemma \ref{lema1}, with a crucial change: instead of using the entropy function \( \operatorname{sign}(u) \), we now consider the semi-entropy \( \operatorname{sign}_+(u - \lambda) := H(u - \lambda) \), where $H$ is the Heaviside function, and make use of condition \eqref{(uslov)}. This modification results in an equation analogous to \eqref{1.7}.

\begin{align}
\label{(10)}
&\partial_t |u_\varepsilon-\lambda|_+ 
\\&+ \notag \sum\limits_{k=2}^{d}\partial_{x_{k}}\Big(\sgn_+(u_\varepsilon-\lambda)\big((f^k_L(\hat{x}_k,u_\varepsilon)-f_L^k(\hat{x}_k,0)) H_\varepsilon(-x_1+\varphi(\hat{x}_1)) \nonumber \\
&\qquad + (f^k_R(\hat{x}_k,u_\varepsilon)-f_R^k(\hat{x}_k,0)) H_\varepsilon(x_1-\varphi(\hat{x}_1))\big)\Big) \nonumber \\
&+ \sum\limits_{k=2}^d \sgn_+(u_\varepsilon-\lambda)(f_L^k(\hat{x}_k,0)-f_R^k(\hat{x}_k,0))\delta_\varepsilon(x_1-\varphi(\hat{x}_1))\frac{\partial \varphi(\hat{x}_1)}{\partial x_k} \nonumber \\
&+ \partial_{x_{1}}\Big(\sgn_+(u_\varepsilon-\lambda)\big((f^1_L(\hat{x}_1,u_\varepsilon)-f_L^1(\hat{x}_1,0)) H_\varepsilon(-x_1+\varphi(\hat{x}_1)) \nonumber \\
&\qquad + (f^1_R(\hat{x}_1,u_\varepsilon)-f_R^1(\hat{x}_1,0)) H_\varepsilon(x_1-\varphi(\hat{x}_1))\big)\Big) \nonumber \\
&+ \sgn(u_\varepsilon-\varepsilon)_+(f_R^1(\hat{x}_1,0)-f_L^1(\hat{x}_1,0))\delta_\varepsilon(x_1-\varphi(\hat{x}_1)) 
\leq \varepsilon\Delta (u_\varepsilon-\lambda)_+.
\nonumber
\end{align}
By choosing \( \lambda = 0 \), integrating equation \eqref{(10)} over \( \mathbb{R}^d \), and using Lemma \ref{lema1}, which ensures that \( u_\varepsilon(t, \cdot) \in L^1(\mathbb{R}^d) \), we arrive at the following result (see the proof of the previous Lemma):

\begin{align*}
\frac{d}{dt}\int\limits_{R^d} |u_\varepsilon(t,x)|_+\,dx
\leq  \sum\limits_{k=1}^d\int\limits_{R^{d-1}}\sgn_+(u_\varepsilon) (f_L^k(\hat{x}_k,0)-f_R^k(\hat{x}_k,0))d\hat{x}_k.
\end{align*}
From here, since $\sgn_+(u_\varepsilon)\geq 0$ and $f_L^k(\hat{x}_k,0)-f_R^k(\hat{x}_k,0)\leq 0$ by the assumption \eqref{(uslov)}, we get:
\[
\frac{d}{dt} \int\limits_{\mathbb{R}^d} |u_\varepsilon(t, x)|_+ \, dx \leq 0.
\]
By further integrating this result over the interval \( (0, t) \) and noting that \( u_\varepsilon(0, \cdot) \equiv 0 \), we obtain:

\[
\int\limits_{\mathbb{R}^d} |u_\varepsilon(t, x)|_+ \, dx \leq 0,
\] which leads to \[
u_\varepsilon\leq 0 \text{ a.e. } (t,x) \in  \mathbb{R}^+\times\mathbb{R}^d
\]
This completes the argument and establishes the lemma. $\Box$

Before we prove the main theorem of the paper (Theorem \ref{main-thm}) we need yet another lemma.

\begin{lemma} \label{Lemma 2.3}
 For $\varepsilon > 0$, let $u_\varepsilon$ be a solution to \eqref{viscosity}, \eqref{icviscosity}. Furthermore, suppose that an increasing continuous function $a : [0,\infty) \to [0,\infty)$ such that $a(0) = 0$ satisfies $\frac{\varepsilon}{a^2(\varepsilon)} \to 0$ as $\varepsilon\to 0$.
Then, for $\sigma(\varepsilon)=\sqrt[4]{\varepsilon}$ we have:
\begin{equation}
\label{conc}    
\begin{split}
&\limsup_{\varepsilon \to 0} \int\limits_0^t \int\limits_{\mathbb{R}^d} u_\varepsilon(\tau, x) e^{-\big|\frac{x_1-\varphi(\hat{x}_1)}{a(\varepsilon)}\big|_{\sigma(\varepsilon)}} \, dx \, d\tau   \lesssim -{t^2}.
\end{split}
\end{equation}
\end{lemma}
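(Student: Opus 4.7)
The plan is to multiply the viscous equation~\eqref{viscosity} by the weight $\psi_\varepsilon(x):=e^{-\big|(x_1-\varphi(\hat{x}_1))/a(\varepsilon)\big|_{\sigma(\varepsilon)}}$, integrate over $\mathbb{R}^d$ (using integration by parts justified by $u_\varepsilon \in L^2([0,T];W^{2,2})$ from Lemma~\ref{lema1} together with $\psi_\varepsilon\in L^\infty$), and then integrate twice in time. Since $u_\varepsilon(0,\cdot)\equiv 0$ by~\eqref{icviscosity}, this yields the representation
\begin{equation*}
\int_0^t\!\int_{\mathbb{R}^d}u_\varepsilon\psi_\varepsilon\,dx\,d\tau \;=\; \int_0^t(t-\tau)\bigl[A_\varepsilon(\tau)+B_\varepsilon(\tau)\bigr]\,d\tau,
\end{equation*}
where $A_\varepsilon(\tau):=\sum_{k=1}^d\int F^k(x,u_\varepsilon)\,\partial_{x_k}\psi_\varepsilon\,dx$ (with $F^k:=f^k_L H_\varepsilon(-x_1+\varphi)+f^k_R H_\varepsilon(x_1-\varphi)$) is the flux term and $B_\varepsilon(\tau):=\varepsilon\int u_\varepsilon\Delta\psi_\varepsilon\,dx$ is the viscous one. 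The claim~\eqref{conc} then follows if $\limsup_{\varepsilon\to 0}[A_\varepsilon(\tau)+B_\varepsilon(\tau)]\leq -c_0<0$ for a.e.\ $\tau$ together with a uniform-in-$\varepsilon$ dominating bound allowing reverse Fatou on $[0,t]$.

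For the flux term, I would use $\partial_{x_1}\psi_\varepsilon=-\sgn_\sigma(g)\psi_\varepsilon/a(\varepsilon)$ and $\partial_{x_k}\psi_\varepsilon=\sgn_\sigma(g)(\partial_{x_k}\varphi)\psi_\varepsilon/a(\varepsilon)$ for $k\geq 2$, where $g:=(x_1-\varphi)/a(\varepsilon)$. After the change of variables $z=g$ (whose Jacobian $a(\varepsilon)\,dz$ exactly cancels the factor $1/a(\varepsilon)$) and using $\varepsilon/a(\varepsilon)\to 0$, the smoothed Heavisides $H_\varepsilon(\pm a(\varepsilon)z)$ converge to $\mathbf{1}_{\mp z>0}$ outside a ``wrong-side'' residue of order $\varepsilon/a(\varepsilon)$. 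Invoking the pointwise bounds $f^k_R(\hat{x}_k,u_\varepsilon)\geq\min_\lambda f^k_R(\hat{x}_k,\lambda)$ and $f^k_L(\hat{x}_k,u_\varepsilon)\leq\max_\lambda f^k_L(\hat{x}_k,\lambda)$ (which decouple the estimate from the unknown $u_\varepsilon$), together with the favorable signs of $\sgn_\sigma(z)$ on $\{z\gtrless 0\}$ and, for $k\geq 2$, of $\partial_{x_k}\varphi<0$, one obtains for $k=1$ the bound $\limsup\int F^1\partial_{x_1}\psi_\varepsilon\,dx\leq\int(\max_\lambda f^1_L-\min_\lambda f^1_R)\,d\hat{x}_1$. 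For $k\geq 2$, applying the change of variables~\eqref{cov} exactly as in the proof of Lemma~\ref{lema1} turns $\partial_{x_k}\varphi\,d\hat{x}_1$ into $-d\hat{x}_k$ and yields $\limsup\int F^k\partial_{x_k}\psi_\varepsilon\,dx\leq\int(\max_\lambda f^k_L-\min_\lambda f^k_R)\,d\hat{x}_k$. Summing and invoking the non-alignment~\eqref{(uslov)} gives $\limsup_\varepsilon A_\varepsilon(\tau)\leq -c_0<0$, uniformly in $\tau\in[0,t]$.

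The viscous term is handled by direct computation: $\partial_{x_1}^2\psi_\varepsilon = a(\varepsilon)^{-2}[\sgn_\sigma(g)^2-\sgn_\sigma'(g)]\psi_\varepsilon$, with analogous expressions for $\partial_{x_k}^2\psi_\varepsilon$ carrying factors of $(\partial_{x_k}\varphi)^2$ and $\partial_{x_k}^2\varphi$. The ``$\sgn_\sigma^2$'' contributions are bounded by $C\varepsilon/a(\varepsilon)^2\cdot\|u_\varepsilon\|_{L^1(\mathbb{R}^d)}$ and vanish by the hypothesis and Lemma~\ref{lema1}. The singular ``$\sgn_\sigma'$'' contributions, supported on the slab $\{|g|<\sigma(\varepsilon)\}$, are treated by writing $\partial_{x_1}^2\psi_\varepsilon=-a(\varepsilon)^{-1}\partial_{x_1}(\sgn_\sigma(g)\psi_\varepsilon)$ and integrating by parts once to shift a derivative onto $u_\varepsilon$; the resulting expression is then estimated using the $W^{2,2}$-regularity of $u_\varepsilon$ from Lemma~\ref{lema1} combined with the specific scaling $\sigma(\varepsilon)=\varepsilon^{1/4}$, so that $B_\varepsilon(\tau)\to 0$ as $\varepsilon\to 0$.

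Collecting these estimates, the $L^1(\mathbb{R}^{d-1})$-integrability of $\sup_z|f^k_{L,R}(\cdot,z)|$ from~\eqref{L1cond} supplies the uniform dominating bound on $(t-\tau)|A_\varepsilon(\tau)+B_\varepsilon(\tau)|$ needed to invoke reverse Fatou, giving
\begin{equation*}
\limsup_{\varepsilon\to 0}\int_0^t\!\int_{\mathbb{R}^d}u_\varepsilon\psi_\varepsilon\,dx\,d\tau \;\leq\; -c_0\int_0^t(t-\tau)\,d\tau \;=\; -\tfrac{c_0}{2}t^2\;\lesssim\;-t^2.
\end{equation*}
The chief obstacle is the control of $B_\varepsilon(\tau)$: the second derivative of $|\cdot|_{\sigma(\varepsilon)}$ is of order $1/\sigma(\varepsilon)=\varepsilon^{-1/4}$ on the transition zone of width $\sigma(\varepsilon)a(\varepsilon)$, so its vanishing depends on a delicate interplay between the three scales $\varepsilon$, $a(\varepsilon)$, and $\sigma(\varepsilon)=\varepsilon^{1/4}$, exploited via the parabolic regularity of $u_\varepsilon$.
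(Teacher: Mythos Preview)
Your overall architecture is the paper's: multiply \eqref{viscosity} by the concentrating weight $\psi_\varepsilon$, integrate in space, then integrate again in time. Your representation $\int_0^t(t-\tau)[A_\varepsilon+B_\varepsilon]\,d\tau$ is equivalent (by Fubini) to the paper's two successive time integrations. The treatment of the flux term $A_\varepsilon$ via the change of variables $z=(x_1-\varphi)/a(\varepsilon)$, the reflection $z\mapsto -z$ on the $H_\varepsilon(-a(\varepsilon)z)$ summand, and the non-alignment condition~\eqref{(uslov)} also matches the paper's argument for its terms $N_1,N_2$.

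The gap is in your handling of $B_\varepsilon$. You propose to dispose of the singular $\sgn_\sigma'(g)$ contribution by rewriting $\partial_{x_1}^2\psi_\varepsilon=-a(\varepsilon)^{-1}\partial_{x_1}(\sgn_\sigma(g)\psi_\varepsilon)$, integrating by parts to throw a derivative onto $u_\varepsilon$, and then invoking the $W^{2,2}$-regularity from Lemma~\ref{lema1}. But that regularity is asserted only for each \emph{fixed} $\varepsilon$; nowhere in the paper is there a uniform-in-$\varepsilon$ bound on $\|\nabla u_\varepsilon\|$ or $\|D^2 u_\varepsilon\|$ --- the only uniform estimate available is the $L^1$-bound \eqref{L1-bnd}. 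After your integration by parts you are left with $\frac{\varepsilon}{a(\varepsilon)}\int_{\mathbb{R}^d} \partial_{x_1}u_\varepsilon\cdot\sgn_\sigma(g)\,\psi_\varepsilon\,dx$, and since $\psi_\varepsilon\notin L^2(\mathbb{R}^d)$ (it depends only on the single combination $x_1-\varphi(\hat{x}_1)$), neither Cauchy--Schwarz nor any gradient bound you have access to will close this.

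The paper avoids this entirely. It keeps both derivatives on $\psi_\varepsilon$ and simply pairs the pointwise bound $|\delta_{\sigma(\varepsilon)}|\lesssim 1/\sigma(\varepsilon)$ with the \emph{uniform} $L^1$-bound \eqref{L1-bnd} on $u_\varepsilon$, obtaining (in the paper's labeling) $|P_1|,|P_4|\lesssim \varepsilon/(a(\varepsilon)\sigma(\varepsilon))$, $|P_2|,|P_3|\lesssim\varepsilon/a^2(\varepsilon)$, and $|P_5|\lesssim\varepsilon/a(\varepsilon)$. The specific choice $\sigma(\varepsilon)=\varepsilon^{1/4}$ is precisely what makes this crude $L^\infty\times L^1$ pairing succeed, so that all of $\bar P\to 0$; no parabolic regularity beyond the $L^1$-estimate of Lemma~\ref{lema1} is invoked. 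Your instinct that the $\sgn_\sigma'$ term is ``the chief obstacle'' is correct, but the resolution is more elementary than you propose.
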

\begin{proof}
In the proof of this proposition we will use the following identities (below $x_k\neq x_1$): 
\begin{equation*}
\begin{split}
&\Big(e^{-\big|\frac{x_1-\varphi(\hat{x}_1)}{a(\varepsilon)}\big|_{\sigma(\varepsilon)}}\Big)'_{x_1}=-\sgn_{\sigma(\varepsilon)}(x_1-\varphi(\hat{x}_1))\frac{1}{a(\varepsilon)}e^{-\big|\frac{x_1-\varphi(\hat{x}_1)}{a(\varepsilon)}\big|_{\sigma(\varepsilon)}},
\\
&\Big(e^{-\big|\frac{x_1-\varphi(\hat{x}_1)}{a(\varepsilon)}\big|_{\sigma(\varepsilon)}}\Big)''_{x_1}=-\delta_{\sigma(\varepsilon)}(x_1-\varphi(\hat{x}_1))\frac{2}{a(\varepsilon)}e^{-\big|\frac{x_1-\varphi(\hat{x}_1)}{a(\varepsilon)}\big|_{\sigma(\varepsilon)}}\\&\qquad\qquad\qquad\qquad \quad \, +\frac{\sgn^2_{\sigma(\varepsilon)}(x_1-\varphi(\hat{x}_1))}{a^2(\varepsilon)}e^{-\big|\frac{x_1-\varphi(\hat{x}_1)}{a(\varepsilon)}\big|_{\sigma(\varepsilon)}},
\\&
\Big(e^{-\big|\frac{x_1-\varphi(\hat{x}_1)}{a(\varepsilon)}\big|_{\sigma(\varepsilon)}}\Big)'_{x_k}=\sgn_{\sigma(\varepsilon)}(x_1-\varphi(\hat{x}_1))\frac{\partial \varphi(\hat{x}_1)}{\partial x_k}\frac{1}{a(\varepsilon)}e^{-\big|\frac{x_1-\varphi(\hat{x}_1)}{a(\varepsilon)}\big|_{\sigma(\varepsilon)}}, 
\\
&\Big(e^{-\left|\frac{x_1-\varphi(\hat{x}_1)}{a(\varepsilon)}\right|_{\sigma(\varepsilon)}}\Big)''_{x_k} 
=\, \left(\frac{\partial \varphi(\hat{x}_1)}{\partial x_k}\right)^2 \frac{1}{a^2(\varepsilon)} 
e^{-\left|\frac{x_1-\varphi(\hat{x}_1)}{a(\varepsilon)}\right|_{\sigma(\varepsilon)}} \\
& \qquad\qquad\qquad\qquad \ \ \,- 2 \delta_{\sigma(\varepsilon)}(x_1-\varphi(\hat{x}_1)) 
\left(\frac{\partial \varphi(\hat{x}_1)}{\partial x_k}\right)^2 \frac{1}{a(\varepsilon)} 
e^{-\left|\frac{x_1-\varphi(\hat{x}_1)}{a(\varepsilon)}\right|_{\sigma(\varepsilon)}} \\
& \qquad\qquad\qquad\qquad \ \ \, + \sgn_{\sigma(\varepsilon)}(x_1-\varphi(\hat{x}_1)) 
\frac{\partial^2 \varphi(\hat{x}_1)}{\partial x_k^2} \frac{1}{a(\varepsilon)} 
e^{-\left|\frac{x_1-\varphi(\hat{x}_1)}{a(\varepsilon)}\right|_{\sigma(\varepsilon)}}.
\end{split}
\end{equation*}  Now we multiply \eqref{viscosity} by $e^{-\big|\frac{x_1-\varphi(\hat{x}_1)}{a(\varepsilon)}\big|_{\sigma(\varepsilon)}}$ and get:
\begin{equation}
\label{MNP}
\begin{split}
&\underbrace{e^{-\big|\frac{x_1 - \varphi(\hat{x}_1)}{a(\varepsilon)}\big|_{\sigma(\varepsilon)}} \, \partial_t u_\varepsilon}_{\text{(M)}} \\
&+ \underbrace{\sum\limits_{k=1}^{d} e^{-|\frac{x_1 \!-\! \varphi(\hat{x}_1)}{a(\varepsilon)}|_{\sigma(\varepsilon)}} 
\partial_{x_k} \left(
f^k_L(\hat{x}_k, u_\varepsilon) H_\varepsilon(-x_1 \!+\! \varphi(\hat{x}_1))
+ f^k_R(\hat{x}_k, u_\varepsilon) H(x_1 \!-\! \varphi(\hat{x}_1))
\right)}_{\text{(N)}} \\
&= \underbrace{\varepsilon e^{-\big|\frac{x_1 - \varphi(\hat{x}_1)}{a(\varepsilon)}\big|_{\sigma(\varepsilon)}} \, \Delta u_\varepsilon}_{\text{(P)}}.
\end{split}
\end{equation}  Then we integrate equation \eqref{MNP} over $[0,T]\times\mathbb{R}^d$. To make the proof easier to follow, we will integrate each term separately. Integrals of the terms $M$, $N$, and $P$ will be denoted respectively by $\bar{M}$, $\bar{N}$, and $\bar{P}$.

{\bf Term M from \eqref{MNP}}

Since $u_\varepsilon(0,x)=0$, we have
\begin{align*}
\bar{M}&=  \int\limits_0^T\int\limits_{\mathbb{R}^d}e^{-\big|\frac{x_1-\varphi(\hat{x}_1)}{a(\varepsilon)}\big|_{\sigma(\varepsilon)}} \partial_t u_\varepsilon\, dt =\int\limits_0^T\int\limits_{\mathbb{R}^d}\partial_t \left(e^{-\big|\frac{x_1-\varphi(\hat{x}_1)}{a(\varepsilon)}\big|_{\sigma(\varepsilon)}}  u_\varepsilon\right)\, dt 
\\&=  \int\limits_{\mathbb{R}^d}e^{-\big|\frac{x_1-\varphi(\hat{x}_1)}{a(\varepsilon)}\big|_{\sigma(\varepsilon)}} u_\varepsilon(T,x)\,dx.
\end{align*}

{\bf Term N from \eqref{MNP}}

\begin{align*}
\bar{N}=&\sum\limits_{k=1}^{d} \int\limits_{0}^{T} \int\limits_{\mathbb{R}^d} 
e^{-\big|\tfrac{x_1 - \varphi(\hat{x}_1)}{a(\varepsilon)}\big|_{\sigma(\varepsilon)}}
\partial_{x_k} \left( 
\begin{aligned}
&f^k_L(\hat{x}_k, u_\varepsilon) H_\varepsilon(-x_1 + \varphi(\hat{x}_1)) \\
&+ f^k_R(\hat{x}_k, u_\varepsilon) H_\varepsilon(x_1 - \varphi(\hat{x}_1))
\end{aligned}
\right) \,dx \, dt \\
=\, &\int\limits_{0}^{T} \int\limits_{\mathbb{R}^d} 
e^{-\big|\tfrac{x_1 - \varphi(\hat{x}_1)}{a(\varepsilon)}\big|_{\sigma(\varepsilon)}}
\partial_{x_1} \left( 
\begin{aligned}
&f^1_L(\hat{x}_1, u_\varepsilon) H_\varepsilon(-x_1 + \varphi(\hat{x}_1)) \\
&+ f^1_R(\hat{x}_1, u_\varepsilon) H_\varepsilon(x_1 - \varphi(\hat{x}_1))
\end{aligned}
\right) \,dx \, dt \\
& +\sum\limits_{k=2}^{d} \int\limits_{0}^{T} \int\limits_{\mathbb{R}^d} 
e^{-\big|\tfrac{x_1 - \varphi(\hat{x}_1)}{a(\varepsilon)}\big|_{\sigma(\varepsilon)}}
\partial_{x_k} \left( 
\begin{aligned}
&f^k_L(\hat{x}_k, u_\varepsilon) H_\varepsilon(-x_1 + \varphi(\hat{x}_1)) \\
&+ f^k_R(\hat{x}_k, u_\varepsilon) H_\varepsilon(x_1 - \varphi(\hat{x}_1))
\end{aligned}
\right) \,dx \, dt
\end{align*}

\begin{align*}
=\, &
-\int\limits_{0}^{T} \int\limits_{\mathbb{R}^d} 
\sgn_{\sigma(\varepsilon)}(x_1 - \varphi(\hat{x}_1)) 
\frac{1}{a(\varepsilon)} e^{-\big|\tfrac{x_1 - \varphi(\hat{x}_1)}{a(\varepsilon)}\big|_{\sigma(\varepsilon)}}
\times \tag{$N_1$} 
\\&\qquad\qquad \times \left( f^1_L(\hat{x}_1, u_\varepsilon) H_\varepsilon(-x_1 + \varphi(\hat{x}_1)) + f^1_R(\hat{x}_1, u_\varepsilon) H_\varepsilon(x_1 - \varphi(\hat{x}_1)) \right)
 \,dx \, dt \\
&+\sum\limits_{k=2}^{d} \int\limits_{0}^{T} \int\limits_{\mathbb{R}^d} 
\sgn_{\sigma(\varepsilon)}(x_1 - \varphi(\hat{x}_1)) 
\frac{\partial \varphi(\hat{x}_1)}{\partial x_k} 
\frac{1}{a(\varepsilon)} e^{-\big|\tfrac{x_1 - \varphi(\hat{x}_1)}{a(\varepsilon)}\big|_{\sigma(\varepsilon)}}\times \tag{$N_2$} \\&\qquad\qquad\times 
\left(f^k_L(\hat{x}_k, u_\varepsilon) H_\varepsilon(-x_1 + \varphi(\hat{x}_1)) + f^k_R(\hat{x}_k, u_\varepsilon) H_\varepsilon(x_1 - \varphi(\hat{x}_1))
\right) \,dx \, dt
\end{align*}
 where in the last equality we have used the integration by parts.

Let us deal with term $N_1$. By introducing the change of variables 
\[\frac{x_1-\varphi(\hat{x}_1)}{a(\varepsilon)}=z,\] with respect to $x_1$,  we get
\begin{equation*}
\begin{aligned}
N_1 = \int\limits_{0}^{T} \int\limits_{\mathbb{R}^{d-1}} \int\limits_{\mathbb{R}} 
& -\sgn_{\sigma(\varepsilon)}(z) \, e^{-|z|_{\sigma(\varepsilon)}} \cdot \Big( 
f_L^1\big(\hat{x}_1, u_\varepsilon(t,a(\varepsilon)z + \varphi(\hat{x}_1),\hat{x}_1)\big) \, H_\varepsilon(-a(\varepsilon)z) \\
&\quad + f_R^1\big(\hat{x}_1, u_\varepsilon(t,a(\varepsilon)z + \varphi(\hat{x}_1),\hat{x}_1)\big) \, H_\varepsilon(a(\varepsilon)z) 
\Big) \, dz \, d\hat{x}_1 \, dt.
\end{aligned}
\end{equation*}
Now we introduce another change of variables, $z \to -z
$ in the summand above containing $H(-a(\varepsilon)z)$ and obtain:
\begin{equation*}
\begin{aligned}
N_1 = \int\limits_{0}^{T} \int\limits_{\mathbb{R}^{d-1}} \int\limits_{0}^{+\infty} 
& \underbrace{\sgn_{\sigma(\varepsilon)}(z)}_{\leq 1} \, e^{-|z|_{\sigma(\varepsilon)}} H_\varepsilon(a(\varepsilon)z) \Big( 
f_R^1\big(\hat{x}_1, u_\varepsilon(t,a(\varepsilon)z + \varphi(\hat{x}_1),\hat{x}_1)\big) \,  \\
&\quad\quad\quad\quad - f_L^1\big(\hat{x}_1, u_\varepsilon(t,-a(\varepsilon)z + \varphi(\hat{x}_1),\hat{x}_1)\big) \,  
\Big) \, dz \, d\hat{x}_1 \, dt.
\end{aligned}
\end{equation*}
Note that by \eqref{(uslov)} 
$$\liminf\limits_{\varepsilon\to 0}N_1\geq \int\limits_{0}^T\int\limits_{\mathbb{R}^{d-1}}\int\limits_{0}^{+\infty}e^{-|z|}\inf\limits_{z_1,z_2\in\mathbb{R}}(f^1_R(\hat{x}_1,z_1)-f_L^1(\hat{x}_1,z_2))\,dz\,d\hat{x}_1\,dt>\tilde{C}T>0. $$
An analogous result holds for the term $N_2$. Indeed, by introducing the change of variables 
\[\frac{x_1-\varphi(\hat{x}_1)}{a(\varepsilon)}=z,\] with respect to $x_k$, $k=2,3,\dots, d$  we get 
\begin{equation*}
\begin{aligned}
N_2 = \sum\limits_{k=2}^{d}\int\limits_{0}^{T} \int\limits_{\mathbb{R}^{d-1}} \int\limits_{\mathbb{R}} 
& -\sgn_{\sigma(\varepsilon)}(z) \, e^{-|z|_{\sigma(\varepsilon)}} \Big( 
f_L^k\big(\hat{x}_k, u_\varepsilon(t,[a(\varepsilon)z + \varphi(\hat{x}_1),\hat{x}_k])\big) \, H_\varepsilon(-a(\varepsilon)z) \\
&\quad + f_R^k\big(\hat{x}_k, u_\varepsilon(t,[a(\varepsilon)z + \varphi(\hat{x}_1),\hat{x}_k])\big) \, H_\varepsilon(a(\varepsilon)z) 
\Big) \, dz \, d\hat{x}_k \, dt.
\end{aligned}
\end{equation*}
Now we again introduce another change of variables, $z \to -z
$ in the summand above containing $H(-a(\varepsilon)z)$ and obtain:
\begin{equation*}
\begin{aligned}
N_2 =\sum\limits_{k=2}^{d} \int\limits_{0}^{T} \int\limits_{\mathbb{R}^{d-1}} \int\limits_{0}^{+\infty} 
& \underbrace{\sgn_{\sigma(\varepsilon)}(z)}_{\leq 1} \, e^{-|z|_{\sigma(\varepsilon)}} H_\varepsilon(a(\varepsilon)z) \Big( 
f_R^k\big(\hat{x}_k, u_\varepsilon(t,[a(\varepsilon)z + \varphi(\hat{x}_1),\hat{x}_k])\big) \,  \\
&\quad\quad\quad\quad - f_L^k\big(\hat{x}_k, u_\varepsilon(t,[-a(\varepsilon)z + \varphi(\hat{x}_1),\hat{x}_k])\big) \,  
\Big) \, dz \, d\hat{x}_k \, dt.
\end{aligned}
\end{equation*}

Again, using \eqref{(uslov)} we conclude that 
\begin{equation*}
\begin{split}
\liminf\limits_{\varepsilon\to 0}N_2&\geq \sum\limits_{k=1}^{d}\int\limits_{0}^T\int\limits_{\mathbb{R}^{d-1}}\int\limits_{0}^{+\infty}e^{-|z|}\inf\limits_{z_1,z_2\in\mathbb{R}}(f^k_R(\hat{x}_k,z_1)-f_L^k(\hat{x}_k,z_2))\, dz\,d\hat{x}_1\,dt \\& \geq\tilde{C}T>0 .
\end{split}
\end{equation*}

Finally, since \( \bar{N} = N_1 + N_2 \), and it was previously shown that
\[
\liminf_{\varepsilon \to 0} N_1 \geq \tilde{C}T > 0
\quad \text{and} \quad
\liminf_{\varepsilon \to 0} N_2 \geq \tilde{C}T > 0,
\]
we conclude that there exists a (generic) constant \( \tilde{C} > 0 \), independent of \( \varepsilon \), such that
\[
\liminf_{\varepsilon \to 0} \bar{N}\geq \tilde{C}T > 0.
\]

{\bf Term P from \eqref{MNP}}

First, note that: 
\begin{equation*}
\begin{split}
&\varepsilon e^{-\big|\frac{x_1-\varphi(\hat{x}_1)}{a(\varepsilon)}\big|_{\sigma(\varepsilon)}} \Delta u_\varepsilon=\varepsilon\sum\limits_{k=1}^{d}e^{-\big|\frac{x_1-\varphi(\hat{x}_1)}{a(\varepsilon)}\big|_{\sigma(\varepsilon)}}\frac{\partial^2u_\varepsilon}{\partial x_k ^2}\\
&=\varepsilon e^{-\big|\frac{x_1-\varphi(\hat{x}_1)}{a(\varepsilon)}\big|_{\sigma(\varepsilon)}}\frac{\partial^2u_\varepsilon}{\partial x_1 ^2}+\varepsilon\sum\limits_{k=2}^{d}e^{-\big|\frac{x_1-\varphi(\hat{x}_1)}{a(\varepsilon)}\big|_{\sigma(\varepsilon)}}\frac{\partial^2u_\varepsilon}{\partial x_k ^2}.
\end{split}
\end{equation*}

Now we have:
\begin{equation}
\begin{split}
&\int\limits_0^T \int\limits_{\mathbb{R}^d} \varepsilon\, 
e^{-\left|\frac{x_1 - \varphi(\hat{x}_1)}{a(\varepsilon)}\right|_{\sigma(\varepsilon)}} 
\frac{\partial^2 u_\varepsilon}{\partial x_1^2} \, dx\,dt 
= -\int\limits_0^T \int\limits_{\mathbb{R}^d} \varepsilon\, 
\left( e^{-\left|\frac{x_1 - \varphi(\hat{x}_1)}{a(\varepsilon)}\right|_{\sigma(\varepsilon)}} \right)'_{x_1} 
\frac{\partial u_\varepsilon}{\partial x_1} \, dx\,dt \\
&= \int\limits_0^T \int\limits_{\mathbb{R}^d} \varepsilon\, 
\left( e^{-\left|\frac{x_1 - \varphi(\hat{x}_1)}{a(\varepsilon)}\right|_{\sigma(\varepsilon)}} \right)''_{x_1} 
u_\varepsilon \, dx\,dt \\
&= -\frac{2\varepsilon}{a(\varepsilon)} \int\limits_0^T \int\limits_{\mathbb{R}^d} 
\delta_{\sigma(\varepsilon)}(x_1 - \varphi(\hat{x}_1))\, 
e^{-\left|\frac{x_1 - \varphi(\hat{x}_1)}{a(\varepsilon)}\right|_{\sigma(\varepsilon)}} 
u_\varepsilon \, dx\,dt \\
&\quad + \frac{\varepsilon}{a^2(\varepsilon)} \int\limits_0^T \int\limits_{\mathbb{R}^d} 
e^{-\left|\frac{x_1 - \varphi(\hat{x}_1)}{a(\varepsilon)}\right|_{\sigma(\varepsilon)}} 
u_\varepsilon \, dx\,dt.
\end{split}
\end{equation} 
Similarly: 
\begin{equation}
\label{3.16}
\begin{split}
&\sum_{k=2}^{d} \int\limits_0^T \int\limits_{\mathbb{R}^d} 
\varepsilon\, e^{-\left|\frac{x_1 - \varphi(\hat{x}_1)}{a(\varepsilon)}\right|_{\sigma(\varepsilon)}} 
\frac{\partial^2 u_\varepsilon}{\partial x_k^2} \, dx \, dt 
\\&= \sum_{k=2}^{d} \Bigg( 
\frac{\varepsilon}{a^2(\varepsilon)} \int\limits_0^T \int\limits_{\mathbb{R}^d} 
\left( \frac{\partial \varphi(\hat{x}_1)}{\partial x_k} \right)^2 
e^{-\left|\frac{x_1 - \varphi(\hat{x}_1)}{a(\varepsilon)}\right|_{\sigma(\varepsilon)}} u_\varepsilon \, dx \, dt \\
&\qquad \ \ - \frac{2\varepsilon}{a(\varepsilon)} \int\limits_0^T \int\limits_{\mathbb{R}^d} 
\delta_{\sigma(\varepsilon)}(x_1 - \varphi(\hat{x}_1)) \left( \frac{\partial \varphi(\hat{x}_1)}{\partial x_k} \right)^2 
e^{-\left|\frac{x_1 - \varphi(\hat{x}_1)}{a(\varepsilon)}\right|_{\sigma(\varepsilon)}} u_\varepsilon \, dx \, dt \\
&\qquad\ \ + \frac{\varepsilon}{a(\varepsilon)} \int\limits_0^T \int\limits_{\mathbb{R}^d} 
\sgn_{\sigma(\varepsilon)}(x_1 - \varphi(\hat{x}_1)) \frac{\partial^2 \varphi(\hat{x}_1)}{\partial x_k^2} 
e^{-\left|\frac{x_1 - \varphi(\hat{x}_1)}{a(\varepsilon)}\right|_{\sigma(\varepsilon)}} u_\varepsilon \, dx \, dt 
\Bigg).
\end{split}
\end{equation} Combining the results from the above we get:

\begin{equation*}
\begin{split}
\bar{P}=&\int\limits_{0}^{T}\int\limits_{\mathbb{R}^d}\varepsilon e^{-\big|\frac{x_1 - \varphi(\hat{x}_1)}{a(\varepsilon)}\big|_{\sigma(\varepsilon)}} \, \Delta u_\varepsilon \,dx\,dt\\
&= \underbrace{-\frac{2\varepsilon}{a(\varepsilon)} 
\int\limits_{0}^{T} \int\limits_{\mathbb{R}^d} 
\delta_{\sigma(\varepsilon)}(x_1 - \varphi(\hat{x}_1))\, 
e^{-\big|\frac{x_1 - \varphi(\hat{x}_1)}{a(\varepsilon)}\big|_{\sigma(\varepsilon)}} 
u_\varepsilon \, dx\,dt}_{P_1} \\
&\quad + \underbrace{\frac{\varepsilon}{a^2(\varepsilon)} 
\int\limits_{0}^{T} \int\limits_{\mathbb{R}^d} 
e^{-\big|\frac{x_1 - \varphi(\hat{x}_1)}{a(\varepsilon)}\big|_{\sigma(\varepsilon)}} 
u_\varepsilon \, dx\,dt}_{P_2}
\end{split}
\end{equation*}

\begin{equation}\label{1.8}
\begin{split}
&\quad+ \sum\limits_{k=2}^{d} \Bigg( \underbrace{  
\frac{\varepsilon}{a^2(\varepsilon)} 
\int\limits_{0}^{T} \int\limits_{\mathbb{R}^d} 
\left( \frac{\partial \varphi(\hat{x}_1)}{\partial x_k} \right)^2 
e^{-\big|\frac{x_1 - \varphi(\hat{x}_1)}{a(\varepsilon)}\big|_{\sigma(\varepsilon)}} 
u_\varepsilon \, dx \, dt}_{P_3} \\
&\qquad - \underbrace{\frac{2\varepsilon}{a(\varepsilon)} 
\int\limits_{0}^{T} \int\limits_{\mathbb{R}^d} 
\delta_{\sigma(\varepsilon)}(x_1 - \varphi(\hat{x}_1)) 
\left( \frac{\partial \varphi(\hat{x}_1)}{\partial x_k} \right)^2 
e^{-\big|\frac{x_1 - \varphi(\hat{x}_1)}{a(\varepsilon)}\big|_{\sigma(\varepsilon)}} 
u_\varepsilon \, dx \, dt}_{P_4} \\
&\qquad + \underbrace{\frac{\varepsilon}{a(\varepsilon)} 
\int\limits_{0}^{T} \int\limits_{\mathbb{R}^d} 
\sgn_{\sigma(\varepsilon)}(x_1 - \varphi(\hat{x}_1)) 
\frac{\partial^2 \varphi(\hat{x}_1)}{\partial x_k^2} 
e^{-\big|\frac{x_1 - \varphi(\hat{x}_1)}{a(\varepsilon)}\big|_{\sigma(\varepsilon)}} 
u_\varepsilon\, dx \, dt}_{P_5} 
\Bigg).
\end{split}
\end{equation}

{Note that} 
\begin{itemize}
    \item $|P_1|\lesssim \frac{\varepsilon}{a(\varepsilon)\sigma(\varepsilon)},$
    \item $|P_2|\lesssim \frac{\varepsilon}{a^2(\varepsilon)},$
    \item $|P_3|\lesssim \frac{\varepsilon}{a^2(\varepsilon)},$
    \item $|P_4|\lesssim \frac{\varepsilon}{a(\varepsilon)\sigma(\varepsilon)},$
    \item $|P_5|\lesssim \frac{\varepsilon}{a(\varepsilon)}.$
\end{itemize}
Let us prove, for instance, that
\[
|P_1| \lesssim \frac{\varepsilon}{a(\varepsilon)\sigma(\varepsilon)}.
\]
We note that
\[ e^{ - \big| \frac{x_1 - \varphi(\hat{x}_1)}{a(\varepsilon)} \big|_{\sigma(\varepsilon)} }  \leq 1.
\]
Furthermore, Lemma~\ref{lema1} provides the bound
\[
\| u_\varepsilon(t,\cdot) \|_{L^1(\mathbb{R}^d)} \leq \tilde{C}_T.
\]
In addition, since $\delta_{\sigma(\varepsilon)}$ is an approximation of the Dirac delta distribution, it must satisfy
\[
\left| \delta_{\sigma(\varepsilon)} \right| \lesssim \frac{1}{\sigma(\varepsilon)}.
\]

Combining all the above estimates, we conclude that
\[
|P_1| \lesssim \frac{\varepsilon}{a(\varepsilon)\sigma(\varepsilon)}.
\]
The remaining four relations are proved in a similar way.

Taking this into account, 
we obtain that the entire Term $\bar{P}$ tends to zero as $\varepsilon \to 0$:
$$
\lim\limits_{\varepsilon \to 0} \bar{P}=0.
$$

\medskip

Finally, it remained to collect the terms $\bar{M}$, $\bar{N}$ and $\bar{P}$. 
We have the following relation:
\begin{equation}\label{zaintegraljenje}
\bar{M} + \bar{N} = \bar{P}.
\end{equation}

By integrating equation \eqref{zaintegraljenje} over the interval $[0, t]$ with respect to the variable $T$, and taking into account that $\liminf\limits_{\varepsilon \to 0} \bar{N} \geq \tilde{C}T$ and $\lim\limits_{\varepsilon \to 0} \bar{P} = 0$, and by using the Fatou lemma, we obtain:

\begin{align*}
\limsup\limits_{\varepsilon \to 0} \int\limits_{0}^t \int\limits_{\mathbb{R}^d} 
e^{-\left|\frac{x_1 - \varphi(\hat{x}_1)}{a(\varepsilon)}\right|_{\sigma(\varepsilon)}} 
u_\varepsilon(T, x) \, dx \, dT 
&\leq -\liminf\limits_{\varepsilon \to 0} \int\limits_{0}^t \bar{N}(T) \, dT \notag \\
&\leq -\int\limits_0^t\liminf\limits_{\varepsilon\to 0}\bar{N}(T)\, dT 
\lesssim -\int\limits_0^t T\, dT,
\end{align*}

which leads us to the final conclusion:

\begin{equation}
\label{conc-}
\limsup\limits_{\varepsilon\to 0} \int\limits_{0}^t\int\limits_{\mathbb{R}^d} 
e^{-\big|\frac{x_1 - \varphi(\hat{x}_1)}{a(\varepsilon)}\big|_{\sigma(\varepsilon)}} 
u_\varepsilon(T,x) \, dx\, dT \lesssim-t^2.
\end{equation} The proof of the lemma is completed. $\Box$

Now, we can prove the main theorem of the paper.

\newpage

{\bf Proof of Theorem \ref{main-thm}}

\medskip

Since, by Lemma \ref{lema1},  $(u_\varepsilon)$ is bounded in $L^1([0,T]\times \mathbb{R}^d)$ it follows that along a subsequence, we have $$u_\varepsilon \rightharpoonup u \text{ in } \mathcal{M}((0,T)\times \mathbb{R}^d).$$
Drawing from Lemma \ref{Lemma 2.2}, we see that the Radon measure $u$ is such that $u \leq 0$.
To proceed, denote for $\varepsilon>0$ 
$$
A_\varepsilon=\{x\in R^d: \, |x_1-\varphi(\hat{x}_1)|<\varepsilon \}.
$$

Now we want to show that for any \( t \in (0, T] \) and \( \varepsilon > 0 \), the measure \( u \) does not belong to \( L^1((0,t) \times A_\varepsilon) \). Specifically, since \( u \) is a Radon measure (and thus outer regular) and the sets \( U_\varepsilon := (0,t) \times A_\varepsilon \) are open, we can express
\begin{equation}
\label{D}
u((0,t) \times \{x_1=\varphi(\hat{x}_1)\}) = \lim_{\varepsilon \to 0} u(A_\varepsilon).
\end{equation}

Moreover, for each \( \varepsilon > 0 \), by the weak lower semicontinuity (see Theorem \ref{Ajlan}),
\begin{equation} \label{eq:2.15}
u(A_\varepsilon) \leq \liminf_{\varepsilon \to 0} u_\varepsilon(A_\varepsilon) = \liminf\limits_{\varepsilon \to 0} \int\limits_0^T \int\limits_{A_\varepsilon} u_\varepsilon(t, x) \, dx \, dt.
\end{equation}
Since \( u_\varepsilon \) is non-positive and bounded (albeit non-uniformly in \( \varepsilon \)), and the function \( e^{-\big|\frac{x_1-\varphi(\hat{x}_1)}{a(\varepsilon)}\big|_{\sigma(\varepsilon)}} \) lies strictly between 0 and 1, the following inequality holds:
\[
\int\limits_0^T \int\limits_{A_\varepsilon} u_\varepsilon(t, x) \, dx \, dt \leq \int\limits_0^T \int\limits_{\mathbb{R}^d} u_\varepsilon(t, x) e^{-\big|\frac{x_1-\varphi(\hat{x}_1)}{a(\varepsilon)}\big|_{\sigma(\varepsilon)}} \, dx \, dt.
\]
Thus, combining the latter with \eqref{eq:2.15} and \eqref{conc-}, we find:
\[
u(A_\varepsilon) \leq \liminf_{\varepsilon \to 0} \int\limits_0^t \int\limits_{\mathbb{R}} u_\varepsilon(t, x) e^{-\big|\frac{x_1-\varphi(\hat{x}_1)}{a(\varepsilon)}\big|_{\sigma(\varepsilon)}} \, dx \, dt \leq -\frac{\tilde{C}t^2}{2} < 0,
\]
for any \( \delta > 0 \). This leads us to conclude that from \eqref{D}
\[
u((0,t) \times \{x_1=\varphi(\hat{x}_1)\}) < 0,
\]
thereby completing the proof of the theorem.

\end{proof}

\begin{remark}
In the case when the flux satisfies the local non-alignement conditions:
\begin{equation}
   \label{(uslov-0)}
\int\limits_{\mathbb{R}^{d-1}}\big(\min_{|\lambda| \leq M } f^k_L(\hat{x}_k,\lambda) - \max_{|\lambda| \leq M } f^k_R(\hat{x}_k, \lambda) \big)\, d\hat{x}_k<C(M), \ \ \forall k=1,\dots,d,
\end{equation} implying that $f_L^k$ and $f_R^k$ possibly become equal at $\infty$ thus making the constant $c$ in \eqref{(uslov)} equals zero (see \cite[Figure 1, C)]{krunic2021} in one dimensional situation), we are only able to conclude that that $(u_\varepsilon)$ is unbounded, but we do not now whether it converges to a singular measure.

Indeed, if we assume that $\sup_\varepsilon|u_\varepsilon|\leq M$, then it is enough to consider the flux components $f^k_L(\hat{x}_k,\lambda)$ and $f^k_R(\hat{x}_k,\lambda)$, $k=1,\dots,d$, for $\lambda\in [-M,M]$ i.e. we can replace it in \eqref{viscosity} with the flux components
$$
\tilde{f}^k_L(\hat{x}_k,\lambda)=\begin{cases}
    {f}^k_L(\hat{x}_k,\lambda), & |\lambda|<M,\\
    {f}^k_L(\hat{x}_k,M), & |\lambda|\geq M,
\end{cases}
$$ and
$$
\tilde{f}^k_R(\hat{x}_k,\lambda)=\begin{cases}
    {f}^k_R(\hat{x}_k,\lambda), & |\lambda|<M,\\
    {f}^k_R(\hat{x}_k,M), & |\lambda|\geq M.
\end{cases}.
$$ Now, \eqref{(uslov)} for the functions  $\tilde{f}^k_L$ and $\tilde{f}^k_R$ becomes
\begin{equation}
   \label{(uslov-1)}
   \begin{split}
&\int\limits_{\mathbb{R}^{d-1}}\big(\min_{\lambda \in \mathbb{R}} \tilde{f}^k_L(\hat{x}_k,\lambda) - \max_{\lambda \in \mathbb{R}} \tilde{f}^k_R(\hat{x}_k, \lambda) \big)\, d\hat{x}_k
\\&=\int\limits_{\mathbb{R}^{d-1}}\big(\min_{|\lambda| \leq M} \tilde{f}^k_L(\hat{x}_k,\lambda) - \max_{|\lambda| \leq M} \tilde{f}^k_R(\hat{x}_k, \lambda) \big)\, d\hat{x}_k
\\&=\int\limits_{\mathbb{R}^{d-1}}\big(\min_{|\lambda| \leq M} {f}^k_L(\hat{x}_k,\lambda) - \max_{|\lambda| \leq M} {f}^k_R(\hat{x}_k, \lambda) \big)\, d\hat{x}_k \overset{\eqref{(uslov-0)}}{\leq } C(M).
\end{split}
\end{equation} However, under such a condition, the vanishing viscosity approximation \eqref{viscosity}, \eqref{icviscosity} will generate the family of solutions admitting a subsequence tending to a singular measure contradicting the assumption on boundedness of $(u_\varepsilon)$.
\end{remark}

\section{Concluding Remarks}

In this work, we analyzed a multidimensional scalar conservation law with spatially discontinuous and non-aligned flux, subject to zero initial data. By employing the vanishing viscosity method and precise control of interface geometry, we have shown that the limit of the approximating solutions does not converge to a classical function, nor even to an $L^1$-function, but rather to a singular Radon measure supported on the flux discontinuity surface.

This result reveals a highly nontrivial and somewhat unexpected phenomenon: the generation of a singular measure in the limit of a scalar conservation law, despite the absence of any nonlinear source terms or external concentration mechanisms. Typically, scalar conservation laws with discontinuous flux are studied under the assumption that weak entropy solutions belong to $L^1$ or are at least measure-valued in a more regular sense. The emergence of a measure that is strictly singular—concentrated along a codimension-one manifold-challenges this intuition.

Such singular limits have previously been observed in more structured or system-type models, including pressureless gas dynamics and sticky particle systems, where delta concentrations arise due to linearity in one of the unknowns or due to particle collision mechanisms. In contrast, the scalar nature and purely flux-based discontinuity in our setting offer no immediate mechanism for such concentration, making the appearance of a singularity both subtle and surprising.

Moreover, since the flux function is nonlinear in the unknown $u$, the limiting measure cannot be simply substituted into the flux to yield a meaningful weak formulation. This raises deep questions about the proper framework for interpreting the limit: whether in the sense of generalized functions (e.g., Colombeau algebras), duality solutions, or some other extension of the standard theory.

Our result suggests that singularity formation in scalar conservation laws with discontinuous flux is a more prevalent and structurally stable phenomenon than previously thought, especially in multidimensional geometries with non-aligned interfaces. This invites further research into the characterization, uniqueness, and physical interpretation of such singular solutions, as well as the development of numerical schemes capable of capturing them correctly.

\section{Acknowledgement}
The author would like to thank D. Mitrovic for suggesting the problem and for his valuable assistance throughout the writing of this paper.
The work of the author is supported in part by the projects "Mathematical Analysis, Optimization, and Machine Learning", funded by Ministry of Education, Science and Innovation of Montenegro and "Methods of optimization, solving variational and quasi-variational inequalities", funded by Montenegrin Academy of Sciences and Arts.

\end{document}